\setlist[enumerate,1]{label=(\roman*)}
\DeclareMathOperator*{\dive}{div}
\newcommand{\ua}{{u_\mathsf a}}
\newcommand{\ub}{{u_\mathsf b}}
\newcommand{\rmd}{{\mathrm d}}
\newcommand{\rmy}{{\mathrm y}}
\newcommand{\bx}{{\bm x}}
\newcommand{\dt}{{\mathrm dt}}
\newcommand{\dx}{{\mathrm d\bx}}
\newcommand{\Rb}{{\mathbb R}}
\newcommand{\Nb}{{\mathbb N}}
\newcommand{\U}{{\mathcal U}}
\newcommand{\Uad}{{\mathcal U_\mathsf{ad}}}
\newcommand{\Utad}{{\U^\tau_\mathsf{ad}}}
\newcommand{\Uthad}{{\mathcal U_\mathsf{ad}^{\tau+h}}}
\definecolor{darkblue}{rgb}{0.0, 0.0, 0.55}
\definecolor{darkorange}{rgb}{1.0, 0.55, 0.0}
\definecolor{darkolivegreen}{rgb}{0.33, 0.42, 0.18}
\crefname{assumption}{\textup{Assumption}}{\textup{Assumptions}}
\crefname{theorem}{\textup{Theorem}}{\textup{Theorems}}
\crefname{lemma}{\textup{Lemma}}{\textup{Lemmas}}
\crefname{proposition}{\textup{Proposition}}{\textup{Propositions}}
\crefname{cor}{\textup{Corollary}}{\textup{Corollaries}}
\crefname{definition}{\textup{Definition}}{\textup{Definitions}}
\crefname{remark}{\textup{Remark}}{\textup{Remarks}}
\crefname{figure}{\textup{Figure}}{\textup{Figures}}
\crefname{table}{\textup{Table}}{\textup{Tables}}
\crefname{algorithm}{\textup{Algorithm}}{\textup{Algorithms}}
\crefname{section}{\textup{Section}}{\textup{Sections}}
\title{Differentiability of the value function in control-constrained parabolic problems\thanks{\textbf{Funding.} The first author was supported by the Alexander von Humboldt Foundation.}}
\author{Alberto Dom\'inguez Corella\thanks{Institut de Mathématiques de Jussieu - Paris Rive Gauche, Sorbonne Universit\'e, 75005 Paris, France; \tt \email{alberto.of.sonora@gmail.com}}
	\and Nicolai Jork\thanks{Department of Mathematics, Eberhard-Karls-Universität T\" ubingen, D-72076 T\"ubingen, Germany; \tt \email{nicolai.jork@uni-tuebingen.de}}
	\and Stefan Volkwein\thanks{Department of Mathematics and Statistics, Universität Konstanz, Germany; \tt \email{stefan.volkwein@uni-konstanz.de}}}
\begin{document}
	
	\maketitle
	
	\begin{abstract}
		Along the optimal trajectory of an optimal control problem constrained by a semilinear parabolic partial differential equation, we prove the differentiability of the value function with respect to the initial condition and, under additional assumptions on the solution of the state equation, {the differentiability of the value function with respect to the initial time}. In our proof, we rely on local growth assumptions commonly associated with the study of second-order sufficient conditions. These assumptions are generally applicable to a wide range of problems, including, for instance, certain tracking-type problems. Finally, we discuss the differentiability of the value function in a neighborhood of the optimal trajectory when a growth condition for optimal controls is used.
	\end{abstract}
	
	\begin{keywords}
		Semilinear parabolic equations, solution stability, value function.
	\end{keywords}
	
	\begin{MSCcodes}
		49L99, 35K58, 49K40. 
	\end{MSCcodes}

	\section{Introduction and literature review}
	
	\subsection{Introduction}
	In the theory of optimal control, one is interested in the minimization of
	functionals subject to several constraints. Among these, an equation usually encapsulates the corresponding system's dynamics on which the so-called control function acts. This paper considers a particular type of optimal control problem where the system's dynamic is given by a semilinear parabolic partial differential equation and where at most a linear appearance of the control function occurs in the functional and the constraining PDE. For problems of this kind, we are interested in the regularity properties of the corresponding value function. For simplicity, we focus on tracking-type problems. The value function represents the minimum cost achieved when starting at a given initial state and following the trajectory determined by the controlled dynamical system. The study of the regularity of the value function is known to aid the investigation of properties of optimal control problems and their solutions. If the value function satisfies certain regularity assumptions, for instance, continuous differentiability with respect to the initial state {and further regularity with respect to the time variable}, the value function satisfies a Hamilton-Jacobi-Bellman equation (HJB). The continuous differentiability of the value function is a prerequisite for policy iteration and to derive HJB-based feedback controls. For objective functionals where a Tikhonov regularization term is present and under nonlinear ODE-constraints, this was studied in \cite{KK24}. When there does not appear a Tikhonov regularization term, one can not expect the same feedback formulas as in the regularized case and the construction of such laws is technically involved. Thus, while we expect that this work is helpful to the investigation of feedback laws for the optimal solutions, this is kept for future work. Motivated by the above, the present paper aims to obtain regularity properties of the value function corresponding to optimal control problems subject to box constraints. This investigation of regularity properties of this kind of problem is challenging. The first difficulty is due to the semilinear structure of the state equation, which requires heavy use of PDE theory. The affine structure of the optimal control problem portrays the second difficulty. No Tikhonov regularization term is present in the objective functional, and we cannot apply arguments typically used in the standard theory of optimal control problem for PDEs. This primarily means that, unlike the Tikhonov regularized problem, we cannot infer a quantitative Legendre-Clebsch type assumption involving the $L^2$-distance of the controls, considered, for instance, in \cite{KB_2022,KB24}. Instead, we need to work with a recently introduced assumption, \cite{CDJ_2023,DJV_2023}, on the joint growth of the first and second variation of the cost functional, involving the linearized states, which is discussed below in detail. This assumption, crucial in the present paper, naturally displays a weaker assumption than that used for the Tikhonov regularized problem. {The study of optimality conditions for affine optimal control problems with PDE-constraints is preceded by the study of optimality conditions for optimal control problems with ODE-constraints. Indeed, much of the development of implementable second-order sufficient conditions (SOSC) originated in the study of problems governed by ordinary differential equations. In this setting, the control–affine structure allows quadratic growth conditions to be expressed via Riccati equations or Goh-type transformations, yielding implementable versions of the shooting algorithm. Important contributions include the work \cite{zbMATH07006554,zbMATH06104987, zbMATH06656093, zbMATH02139063,zbMATH05700767,zbMATH06135817}. We refer also to the works \cite{ zbMATH07262056,zbMATH07223011} that apply sufficient optimality conditions for the study of stability of the optimal solutions.}
    In our study, the differentiability of the value function is proved with respect to initial data in $L^2(\Omega)$. This implies low regularity of the corresponding optimal states and restricts the model's generality to simpler nonlinear equations as in \cite[Section 6]{KB24}. Using the same arguments in the proofs below, one can consider initial data in $L^\infty(\Omega)$, which allows for more general models. Further, in some instances, where we additionally provide the differentiability in a neighborhood of the initial data, to overcome the problem of low regularity of the states, we consider differentiability with respect to the $ L^\infty$-Norm.
    Our main results are the following.
    \smallskip
    \begin{enumerate}
        \item The continuous differentiability of the value function along the optimal trajectory with respect to the initial data.
        \item Under additional assumptions on the optimal state, the continuous differentiability of the value function along the optimal trajectory with respect to the time parameter.
        \item The continuous differentiability of the value function in a neighborhood along the optimal trajectory with respect to the initial data and the time parameter for a sub-class of problems.
    \end{enumerate} 
    To the authors' best knowledge, it is the first time that the differentiability of the value function is proved for unregularized optimal control problems with PDE constraints under such weak assumptions. Furthermore, the results are new even for optimal control problems with ODE constraints. We end this introduction with a short discussion of related literature.
	\subsection{Literature review}
	Several publications in recent years have considered certain aspects of the value function's regularity for optimal control problems constrained by ODEs or PDEs. The following is a non-exhaustive summary of related results. Since the approaches usually differ when considering finite or infinite horizon problems, we split the literature into these two cases.
	\noindent
	\subsubsection{Finite horizon problems}
	In \cite{CF2013}, it is shown that the value function of a Bolza optimal control problem can fail to be differentiable everywhere. Furthermore, it is shown that if the value function is proximally subdifferentiable at a point {$(t,x)$}, then it is smooth in a neighborhood of that point and that the value function stays smooth in a neighborhood of any optimal trajectory starting at a point where the proximal subdifferential is nonempty. We mention that the objective functional contains a quadratic control term in the problem considered in \cite{CF2013}. In \cite{G2005}, optimal control problems with convex costs, for which Hamiltonians have Lipschitz-continuous gradients, are considered. Such problems include extensions of the linear-quadratic regulator with hard and possibly state-dependent control constraints and linear-quadratic penalties. Lipschitz-continuous differentiability and strong convexity of the terminal cost are shown to be inherited by the value function, leading to Lipschitz continuity of the optimal feedback. The paper \cite{BM_00} investigates the differentiability of a Tikhonov regularized finite horizon optimal control problem. As the regularity of solutions to Hamilton-Jacobi equations is a related topic, we also mention the works \cite{EZ2022,FS2004}.
	
	\smallbreak
	\noindent
	\subsubsection{Infinite horizon problems}
	In \cite{MR3853601}, the authors consider the regularity of the value function for an infinite-horizon optimal control problem subject to an infinite-dimensional state equation where the state and control variables appear bilinearly. They obtain that the value function is infinitely differentiable in the neighborhood of the steady state under a stabilizability assumption. Related questions are discussed in the subsequent publications \cite{MR4026593,MR3985547}. In \cite{KB_2022}, an abstract framework was introduced by which the authors obtain the local continuous differentiability of the value function in $L^2$ associated with optimal problems subject to abstract semilinear parabolic equations with control constraints. Subsequently, in \cite{KB24}, an abstract framework for the study of the continuous differentiability of local value functions with $H^1(\Omega)$ initial data was introduced with application to semilinear parabolic equations and control constraints. Consequently, the authors provide the local well-posedness of the associated HJB equation. In both works, it was used that a Tikhonov term is present in the objective functional, which guarantees a uniform stability estimate for the optimal controls.
	\section{Control model and overview of main results}
	
	This section introduces the reference control model under investigation and presents our main results. Since we consider a standard optimal control problem widely studied in the literature, we defer the preliminaries to the next section.
	
	\subsection{Reference model}
	
	Consider a bounded domain $\Omega\subset \mathbb R^d$, for $d\in\{1,2,3\}$ with Lipschitz boundary $\partial \Omega$. Let $T>0$ be a positive number representing the time horizon. We set $Q\colonequals(0,T)\times\Omega$ and $\Sigma\colonequals(0,T)\times \partial \Omega$. For functions $\ua,\ub:Q\to\mathbb R$ representing control bounds, the set of admissible controls is given by 
	\begin{align*}
		\Uad\colonequals\big\{ u\in L^1(Q)\,\big|\,\ua\le u\le\ub\text{ a.e. in }Q\big\}.
	\end{align*}
	Under proper assumptions (see \cref{Assumption} below) it is well-known that for each control $u\in\Uad$ there is an associated state $y_{u}\in  W(0,T)$ satisfying the parabolic constraint
	\begin{equation}
		\label{system}
		\left\{
		\begin{aligned}
			&\frac{\partial y_u}{\partial t}-\dive\big(A(\bx)\nabla y_u\big) + f(y_u)=u\text{ a.e. in } Q, \\&
			y_u=0\text{ a.e. on }\Sigma,\quad y_u(0)=y_0\text{ a.e. in }\Omega.
		\end{aligned}
		\right.
	\end{equation}
	Here, $y_0\in C(\bar \Omega)$ is a prescribed initial datum, and $A\in L^\infty\big(\Omega; \mathbb R^{d\times d}\big)$  a diffusion matrix function. The nonlinearity in \cref{system} is represented by a function $f:\mathbb R\to\mathbb R$. Details on the functional spaces involved and the weak formulation of equation (\ref{system}) will be given in the next section.
	\smallbreak 
	For a target datum $y_{Q}:Q\to\mathbb R$, the objective functional $\mathcal J:\Uad\to \mathbb R$ is given by 
	\begin{align}\label{objfun}
		\mathcal J(u)\colonequals\frac{1}{2}\int_Q|y_{u}(t,\bx) - y_{Q}(t,\bx)|^2\, \mathrm d\bx\mathrm dt.
	\end{align}
	The optimal control problem we are interested in reads as 
	\begin{align}
		\label{P}
		\tag{\textbf P}
		\min\mathcal J(u)\quad\text{subject to}\,\,\,\, u\in\Uad.
	\end{align}
	The problem is to find a control within the control constraints that makes the state resemble the experimental data. This problem can be seen as an inverse problem and is usually referred to in the literature as a \textit{tracking problem}.
	\smallbreak 
	We consider problem \eqref{P} under the following assumption.
	
	\begin{assumption}
		\label{Assumption}
		The following statements hold.
		\begin{itemize}
			\item[\em(i)] The domain $\Omega$ is bounded and has Lipschitz boundary. The matrix function $A$ belongs to $L^\infty(\Omega; \Rb^{d\times d})$ and is uniformly positive definite and symmetric. The initial datum $y_0$ belongs to $C(\bar \Omega)$. The functions $\ua,\ub$ belong to $L^\infty(Q)$.
			\item[\em(ii)] The function $f:\Rb\to\Rb$ is of class $C^2$, its derivatives $f':\Rb\to\Rb$ and $f'':\Rb\to\Rb$ are bounded and  $	f'(y) \geq 0$ for all $y \in\Rb.$ 
			\item [\em(iii)] The target datum $y_{Q}$ belongs to $L^r(Q)$ for some   $r>1+\nicefrac{d}{2}$.		
		\end{itemize}
	\end{assumption}
	It is worth noting that under this assumption, nonlinearities such as \( f(y) = \sin y + y \) are permitted in equation \cref{system}, as well as linear functions $f(y) = \alpha y$ for some $\alpha > 0$. The presented results can be extended in a straightforward way to the case that $f$ also depends on $t$ and $\bx$. To simplify the presentation, we have omitted this extension.
	\smallbreak
	\cref{Assumption}, {due to \cref{Prop3.1} and \cref{prop3.3}} (below in the preliminaries section), ensures that equation \cref{system} is well-defined and that problem \cref{P} has at least one global minimizer.
	To simplify notation, from now on, we fix a reference global minimizer $\bar u\in\Uad$ of problem \cref{P}. 
	\smallbreak 
	Let us now recall quickly that for each $u\in\Uad$, one can associate an adjoint state $p_{u}\in W(0,T)$ such that the first variation of the objective functional can be identified with it. Indeed, it can be easily checked that at a control $u\in\Uad$
	\begin{align*}
		\mathcal J'(u)v = \int_Q p_{u}(t,\bx) v(t,\bx)\,\mathrm d\bx\dt \quad\text{for all }v\in \Uad -  u,
	\end{align*}
	where $\Uad-u=\{v\,|\,u+v\in\Uad\}$. The first-order necessary optimality condition for the reference minimizer reads as
	\begin{align*}
		p_{\bar u}(u-\bar u)\ge 0\quad\text{a.e. in } Q\text{ and for all }u\in\Uad.
	\end{align*}
	\smallbreak 
	To describe the second variation at the reference minimizer, we introduce a family of functions representing the linearization of equation \cref{system}. For each $v\in \Uad-\bar u$, we denote by $z_v\in W(0,T)$ the unique function satisfying 
	\begin{equation}\label{linstate}
		\begin{cases}
			\displaystyle\frac{\partial z_v}{\partial t} 	- \dive\big(A(\bx)\nabla z_v\big) + f'(y_{\bar u}) z_v = v \text{ a.e. in } Q, \\
			z_v=0 \text{ a.e. on } \Sigma,\  z_v(0) = 0 \text{ a.e. in } \Omega. 
		\end{cases}
	\end{equation}
	For each $v\in \Uad-\bar u$, the second variation can be easily calculated as
	\begin{align*}
		\mathcal J''(\bar u)(v,v) =  \int_Q \Big(1 - p_{\bar u}(t,\bx)f''(y_{\bar u}(t,\bx))\Big)z_{v}^2(t,\bx)\,\dx\dt.
	\end{align*}
	Due to the structural form of the second variation, it is natural to consider second-order conditions involving the norm \( \|z_v\|_{L^2(Q)} \) rather than \( \|v\|_{L^p(Q)}\) for some $p\in[1,+\infty)$. This has been previously noted, and assumptions of the form $\mathcal{J}''(\bar{u})(v,v)\ge c\, \|z_v\|_{L^2(Q)}^2$ for some constant $c>0$, over a cone of critical directions, have been used in the literature as a sufficient condition for strict local optimality; see, e.g., \cite[Theorem 3.1]{CM_2020}. In \cite{CDJ_2023,DJV_2023}, the authors introduced an assumption involving also the first variation of the objective functional. In the sequel, we employ a slightly different assumption, including the factor $\nicefrac{1}{2}$ next to the second variation.
	
	\begin{assumption}
		\label{MA}
		There exist numbers  $\delta>0$ and $c>0$ such that 
		\begin{align}\label{ssc}
			\mathcal J'(\bar u)v+\frac{1}{2}\, \mathcal J''(\bar u)(v,v)\ge c\,{\| z_{v}\|}_{L^2(Q)}^2
		\end{align}
		for all $v\in\Uad-\bar u$ satisfying $\| z_{v} \|_{L^2(Q)}\le\delta$.
	\end{assumption}
	
	The previous assumption can be seen as both necessary and sufficient for addressing stability in optimal control problems. {The sufficiency of \cref{MA} for local optimality and stability for the optimal states has been demonstrated in \cite{CDJ_2023, DJV_2023}, while its necessity for the stability of optimal solutions with respect to perturbations of the tracking data was established in \cite[Theorem 5.3]{DL_2024}}.
	\smallbreak 
	To conclude this subsection, we state a result linking \cref{MA} to a local error bound, which, in turn, ensures strict local optimality.
	
	\begin{proposition}\label{Proploc}
		Suppose that  $\bar u\in\Uad$ satisfies \cref{MA}.  Then, there exist numbers $\delta>0$ and $c>0$ such that
		\begin{align}
			\label{errorbound}
			\mathcal J(u) - \mathcal J(\bar u) \ge c\,{\| y_{u} - y_{\bar u}\|}_{L^2(Q)}^2
		\end{align}
		for all $u\in\Uad$ such that $\|y_{u}-y_{\bar u}\|_{L^2(Q)}\le\delta$.
	\end{proposition}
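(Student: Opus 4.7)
Let $v \colonequals u - \bar u$ and $\xi \colonequals y_u - y_{\bar u}$. The plan is to combine Assumption~\ref{MA} with a second-order expansion of $\mathcal J$ about $\bar u$ and with a comparison between the nonlinear state difference $\xi$ and the linearized state $z_v$. Since $\mathcal J$ is exactly quadratic in the state,
\[
\mathcal J(u) - \mathcal J(\bar u) = \int_Q (y_{\bar u} - y_Q)\,\xi\,\dx\dt + \tfrac{1}{2}\|\xi\|_{L^2(Q)}^2.
\]
Splitting $\xi = z_v + \eta$ with $\eta \colonequals \xi - z_v$ and invoking the adjoint identity $\mathcal J'(\bar u)v = \int_Q(y_{\bar u}-y_Q)\,z_v\,\dx\dt$ together with the formula for $\tfrac{1}{2}\mathcal J''(\bar u)(v,v)$, this rearranges as
\[
\mathcal J(u) - \mathcal J(\bar u) = \Bigl[\mathcal J'(\bar u)v + \tfrac{1}{2}\mathcal J''(\bar u)(v,v)\Bigr] + R(u),
\]
where
\[
R(u) = \int_Q(y_{\bar u}-y_Q)\,\eta\,\dx\dt + \int_Q z_v\,\eta\,\dx\dt + \tfrac{1}{2}\|\eta\|_{L^2(Q)}^2 + \tfrac{1}{2}\int_Q p_{\bar u}\,f''(y_{\bar u})\,z_v^2\,\dx\dt.
\]
Assumption~\ref{MA} bounds $\mathcal J'(\bar u)v + \tfrac{1}{2}\mathcal J''(\bar u)(v,v)\ge c\|z_v\|_{L^2(Q)}^2$ as soon as $\|z_v\|_{L^2(Q)}\le \delta$, so the task reduces to comparing $\|z_v\|_{L^2(Q)}$ with $\|\xi\|_{L^2(Q)}$ and controlling $R(u)$.

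The key step is to show that $\eta$ is of strictly higher order than $\xi$ in $L^2(Q)$. Subtracting \eqref{linstate} from \eqref{system}, $\eta$ solves the linear parabolic problem
\[
\partial_t\eta - \dive(A\nabla\eta) + f'(y_{\bar u})\eta = -\bigl(f(y_u) - f(y_{\bar u}) - f'(y_{\bar u})\xi\bigr),
\]
with zero initial and boundary data and right-hand side bounded pointwise by $\tfrac{1}{2}\|f''\|_\infty\,\xi^2$. A standard parabolic energy estimate combined with the uniform $L^\infty(Q)$-bound on states (coming from the $L^\infty$-bound on $\Uad$) yields $\|\eta\|_{L^2(Q)}\le C\|\xi\|_{L^\infty(Q)}\|\xi\|_{L^2(Q)}$; together with the compactness of the control-to-state map on $\Uad$, which forces $\|\xi\|_{L^\infty(Q)}\to 0$ as $\|\xi\|_{L^2(Q)}\to 0$, this produces $\|\eta\|_{L^2(Q)} = o(\|\xi\|_{L^2(Q)})$. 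For $\|\xi\|_{L^2(Q)}$ small enough, $\|z_v\|_{L^2(Q)}\ge \tfrac{1}{2}\|\xi\|_{L^2(Q)}$, so Assumption~\ref{MA} applies and gives $\mathcal J'(\bar u)v + \tfrac{1}{2}\mathcal J''(\bar u)(v,v)\ge \tfrac{c}{4}\|\xi\|_{L^2(Q)}^2$. The potentially borderline term $\tfrac{1}{2}\int_Q p_{\bar u}f''(y_{\bar u})z_v^2\,\dx\dt$ in $R(u)$ is eliminated via the further expansion $\eta = \tfrac{1}{2} w + \rho$, where $w$ solves the second-order linearization of \eqref{system} and $\rho$ is cubic in $v$; the dual identity $\int_Q(y_{\bar u}-y_Q)w\,\dx\dt = -\int_Q p_{\bar u}f''(y_{\bar u})z_v^2\,\dx\dt$ cancels it, and the surviving pieces are $o(\|\xi\|_{L^2(Q)}^2)$ by the key estimate, yielding \eqref{errorbound} after shrinking $\delta$ once more.

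The main obstacle is securing the superlinear smallness $\|\eta\|_{L^2(Q)} = o(\|\xi\|_{L^2(Q)})$. A naive parabolic energy estimate gives only $\|\eta\|_{L^2(Q)}\lesssim \|\xi\|_{L^\infty(Q)}\|\xi\|_{L^2(Q)}$, which is merely $O(\|\xi\|_{L^2(Q)})$ when one uses only the uniform $L^\infty$-bound on $\xi$, and this is too weak to convert the $\|z_v\|_{L^2(Q)}^2$-coercivity of Assumption~\ref{MA} into $L^2$-quadratic growth of $\mathcal J$. Overcoming it requires the genuine continuity of the control-to-state map from a neighborhood of $\bar u$ in $L^2(Q)$ into $L^\infty(Q)$, a separate consequence of parabolic regularity and compactness rather than of Assumption~\ref{MA}. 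Once this is in hand, the rest is bookkeeping in Taylor expansions around $\bar u$.
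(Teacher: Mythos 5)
Your overall strategy is sound and, unlike the paper, which simply invokes \cite[Lemma 6]{CT_2016} (``a standard application of Taylor's theorem''), you give a self-contained argument exploiting the exact quadratic structure of the tracking functional. The route behind the paper's citation is to write $\mathcal J(u)-\mathcal J(\bar u)=\mathcal J'(\bar u)v+\tfrac12\mathcal J''(u_\theta)(v,v)$ for an intermediate control $u_\theta$ and then to show $|\mathcal J''(u_\theta)(v,v)-\mathcal J''(\bar u)(v,v)|\le\varepsilon\,\|z_v\|_{L^2(Q)}^2$ once $\|y_u-y_{\bar u}\|$ is small; your decomposition carries the same information but makes the remainder fully explicit. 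The ingredients you isolate are the right ones: the superlinear bound on $\eta=\xi-z_v$, the compactness of $\{y_u : u\in\Uad\}$ in $C(\bar Q)$ forcing $\|\xi\|_{L^\infty(Q)}\to0$ (this needs uniform H\"older estimates for the states, which hold under \cref{Assumption} but are not developed in the paper), and a duality identity to neutralize the terms involving $p_{\bar u}f''$.

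Two points as written do not close. First, $R(u)$ contains $\int_Q(y_{\bar u}-y_Q)\,\eta\,\dx\dt$, which is first order in $\eta$; your key estimate $\|\eta\|_{L^2(Q)}=o(\|\xi\|_{L^2(Q)})$ makes this only $o(\|\xi\|_{L^2(Q)})$, not $o(\|\xi\|_{L^2(Q)}^2)$, so the entire burden falls on the cancellation step, and the leftover $\int_Q(y_{\bar u}-y_Q)\rho\,\dx\dt$ needs its own estimate --- it does not follow ``by the key estimate''. Second, $\rho$ cannot be ``cubic in $v$'': $f$ is only $C^2$ under \cref{Assumption}, so no third-order expansion is available. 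Both issues are repaired by one computation. Since $\eta$ solves the linearized equation with source $-\bigl(f(y_u)-f(y_{\bar u})-f'(y_{\bar u})\xi\bigr)=-\tfrac12 f''(\tilde y)\xi^2$ for an intermediate $\tilde y$, duality with the adjoint gives $\int_Q(y_{\bar u}-y_Q)\eta\,\dx\dt=-\tfrac12\int_Q p_{\bar u}f''(\tilde y)\xi^2\,\dx\dt$, and combining with the term $\tfrac12\int_Q p_{\bar u}f''(y_{\bar u})z_v^2\,\dx\dt$ yields
\begin{align*}
\tfrac12\int_Q p_{\bar u}\Bigl[\bigl(f''(y_{\bar u})-f''(\tilde y)\bigr)z_v^2-f''(\tilde y)\,\eta\,(\xi+z_v)\Bigr]\,\dx\dt,
\end{align*}
which is $o(\|\xi\|_{L^2(Q)}^2)$ by the boundedness of $p_{\bar u}$ (here \cref{Assumption}\,(iii) enters), the uniform continuity of $f''$ on the bounded range of the states together with $\|\xi\|_{L^\infty(Q)}\to0$, and your bound on $\|\eta\|_{L^2(Q)}$. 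With this replacement, and after checking $\tfrac12\|\xi\|_{L^2(Q)}\le\|z_v\|_{L^2(Q)}\le 2\|\xi\|_{L^2(Q)}$ so that \cref{MA} is applicable, the proof closes.
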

	
	\begin{proof}
		This follows from a standard application of the Taylor's theorem and estimates involving the objective functional, see, e.g., \cite[Lemma 6]{CT_2016}. 
	\end{proof}
	
	\subsection{A family of problems}
	
	We introduce a family of optimal control problems in which we can embed the problem \eqref{P}. For each \emph{initial time} $\tau\in[0,T]$ we define $Q_\tau\colonequals(\tau,T)\times\Omega$, $\Sigma_\tau\colonequals(\tau,T)\times\partial\Omega$ and set 
	\begin{align*}
		\Utad\colonequals\big\{u\in L^1\big(Q_\tau)\,\big|\,\ua\le u\le\ub\text{ a.e. in }Q_\tau\big\}.
	\end{align*}
	For an \emph{initial condition} $\eta\in L^2(\Omega)$ and an admissible control $u\in\Utad$ there is an associated state $y^{\tau,\eta}_{u}\in W(\tau,T)$ that is the weak solution of equation 
	\begin{align}\label{eq:taustate}
		\left\{
		\begin{aligned}
			&\frac{\partial y}{\partial t}- \dive\big(A(\bx)\nabla y\big) + f(y) = u \text{ a.e. in } Q_\tau, \\
			&y=0 \text{ a.e. on } \Sigma_\tau,\quad  y(\tau) = \eta \text{ a.e. in } \Omega. 
		\end{aligned}
		\right.
	\end{align}
	%
	%
	\smallbreak
	We consider the family of objective functionals $\mathcal J_{\tau, \eta}:\Utad\to\Rb$ given by
	\begin{align*}
		\mathcal J_{\tau,\eta}(u)\colonequals \frac{1}{2}\int_{Q_\tau} |y^{\tau,\eta}_{u}(t,\bx) - y_{Q}(t,\bx)|^2\,\dx\dt.
	\end{align*}
	The family $\{\mathcal J_{\tau,\eta}\,|\,\tau\in [0,T]\text{ and }\eta\in L^2(\Omega)\}$ of objective functionals induces the family of optimal control problems
	\begin{align}
		\label{Pteta}
		\tag{\text{$\mathbf P_{\tau,\eta}$}}
		\min \mathcal J_{\tau,\eta}(u)\quad\text{subject to}\quad u\in\Utad.
	\end{align}
	Notice that \eqref{P} is the same problem as \eqref{Pteta} for the choices $\tau=0$ and $\eta=y_0$. In this sense, \eqref{P} is an element of the family $\{\eqref{Pteta}\mid\tau\in[0,T]\text{ and }\eta\in L^2(\Omega)\}$.
	\smallbreak 
	For the reference global solution $\bar u$ of \cref{P}, the so-called \textit{Bellman's Optimality Principle} ensures that $\bar u|_{[\tau, T]}$ is a global minimizer of problem  \cref{Pteta} with $\eta = y_{\bar u}(\tau)$; in some sense \textit{optimality propagates forward in time}.
	\subsection{Overview of results}
	
	We now define the main object of interest in this paper, the so-called value function $\upsilon: [0,T] \times L^2(\Omega) \to \Rb$ given by
	\begin{align*}
		\upsilon(\tau, \eta)\colonequals\inf\big\{\mathcal{J}_{\tau,\eta}(u)\,\big|\,u\in\Utad\big\}.
	\end{align*}
	It is well known that under basic assumptions, such as \cref{Assumption}, the value function is locally Lipschitz continuous in the space variable. Thus, by applying Rademacher-type theorems \cite[Theorem 2.5]{P_1990}, one can deduce the existence of Fréchet differentiability points in the space variable. However, this does not allow to  identify the points of differentiability.
    Below, we provide sufficient conditions for a point to be in the set of differentiability points of the value function. For functions $\varphi:(0,T)\times\Omega\to \mathbb{R}$, we adopt a slight abuse of notation by writing its functional lift with the same symbol. In this convention, $\varphi(t)$ is understood as a function from $\Omega$ into $\mathbb{R}$, defined by $\varphi(t)(x) = \varphi(t,x)$ for $x \in \Omega$.
    \smallbreak 
	Our main result provides sufficient conditions for the $L^2$-differentiability of the value function with respect to the state variable. The  proof is given in Section \ref{proofss}.
	
	\begin{theorem}\label{thm3}
		Suppose $\bar u\in\Uad$ is a unique global minimizer of \eqref{P} satisfying \cref{MA}. For any given  $\tau\in [0,T]$, the function $\upsilon(\tau):L^2(\Omega)\to\mathbb R$ is Fr\'echet-differentiable at $y_{\bar u}(\tau)\in L^2(\Omega)$ and
		\begin{align*}
			\nabla\upsilon\big(\tau,y_{\bar u}(\tau)\big) = p_{\bar u}(\tau)\ \text{ in }L^2(\Omega).
		\end{align*}
	\end{theorem}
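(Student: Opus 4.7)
My plan is to establish the two-sided expansion
\[
\upsilon\big(\tau,y_{\bar u}(\tau)+\eta\big)-\upsilon\big(\tau,y_{\bar u}(\tau)\big)=\langle p_{\bar u}(\tau),\eta\rangle_{L^2(\Omega)}+o\big(\|\eta\|_{L^2(\Omega)}\big)
\]
by proving the upper and lower bounds separately. By Bellman's principle, the restriction of $\bar u$ to $(\tau,T)$ is a global minimizer of $(\mathbf P_{\tau,y_{\bar u}(\tau)})$ with value $\mathcal J_{\tau,y_{\bar u}(\tau)}(\bar u)$. Admissible perturbations of this restriction embed into $\Uad-\bar u$ by extension by zero on $(0,\tau)$, and so \cref{MA} transfers to the restricted problem; hence \cref{Proploc} delivers an error bound
\[
\mathcal J_{\tau,y_{\bar u}(\tau)}(u)-\mathcal J_{\tau,y_{\bar u}(\tau)}(\bar u)\ge c\,\|y^{\tau,y_{\bar u}(\tau)}_{u}-y_{\bar u}\|_{L^2(Q_\tau)}^{2}
\]
in an $L^2(Q_\tau)$-neighborhood of $y_{\bar u}$. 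I also introduce the sensitivity state $\zeta^\eta\in W(\tau,T)$ solving
\[
\partial_t\zeta^\eta-\dive\big(A(\bx)\nabla\zeta^\eta\big)+f'(y_{\bar u})\zeta^\eta=0,\quad \zeta^\eta(\tau)=\eta,\quad \zeta^\eta|_{\Sigma_\tau}=0,
\]
which, by integration by parts in time against the adjoint equation for $p_{\bar u}$, satisfies the key identity $\int_{Q_\tau}(y_{\bar u}-y_Q)\zeta^\eta\,\dx\dt=\langle p_{\bar u}(\tau),\eta\rangle_{L^2(\Omega)}$.

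\textbf{Upper bound.} I test the perturbed problem with the competitor $\bar u$, writing $\hat y^\eta:=y^{\tau,y_{\bar u}(\tau)+\eta}_{\bar u}$. A standard stability argument for \eqref{eq:taustate}, together with a Taylor expansion of the nonlinearity $f$, yields $\|\hat y^\eta-y_{\bar u}\|_{L^2(Q_\tau)}=O(\|\eta\|_{L^2(\Omega)})$ and $\|\hat y^\eta-y_{\bar u}-\zeta^\eta\|_{L^2(Q_\tau)}=o(\|\eta\|_{L^2(\Omega)})$. Plugging this into
\[
\mathcal J_{\tau,y_{\bar u}(\tau)+\eta}(\bar u)-\mathcal J_{\tau,y_{\bar u}(\tau)}(\bar u)=\int_{Q_\tau}(y_{\bar u}-y_Q)(\hat y^\eta-y_{\bar u})\,\dx\dt+\tfrac12\|\hat y^\eta-y_{\bar u}\|_{L^2(Q_\tau)}^2
\]
and invoking the adjoint identity produces the desired upper bound.

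\textbf{Lower bound.} Let $u^\eta\in\Utad$ be a minimizer of $(\mathbf P_{\tau,y_{\bar u}(\tau)+\eta})$, and set $y^\eta:=y^{\tau,y_{\bar u}(\tau)+\eta}_{u^\eta}$ and $\tilde y^\eta:=y^{\tau,y_{\bar u}(\tau)}_{u^\eta}$. I decompose
\[
\upsilon\big(\tau,y_{\bar u}(\tau)+\eta\big)-\upsilon\big(\tau,y_{\bar u}(\tau)\big)=\underbrace{\big[\mathcal J_{\tau,y_{\bar u}(\tau)+\eta}(u^\eta)-\mathcal J_{\tau,y_{\bar u}(\tau)}(u^\eta)\big]}_{=:\,\mathrm{I}}+\underbrace{\big[\mathcal J_{\tau,y_{\bar u}(\tau)}(u^\eta)-\mathcal J_{\tau,y_{\bar u}(\tau)}(\bar u)\big]}_{=:\,\mathrm{II}}.
\]
A Lipschitz estimate for the perturbed states combined with $\upsilon(\tau,\cdot)$ locally Lipschitz first yields $\mathrm{II}=O(\|\eta\|_{L^2(\Omega)})$, hence $\|\tilde y^\eta-y_{\bar u}\|_{L^2(Q_\tau)}\to 0$ by the error bound above, and in fact $\|\tilde y^\eta-y_{\bar u}\|_{L^2(Q_\tau)}=O(\|\eta\|_{L^2(\Omega)}^{1/2})$. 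Term $\mathrm{I}$ I expand as
\[
\mathrm{I}=\int_{Q_\tau}(\tilde y^\eta-y_Q)(y^\eta-\tilde y^\eta)\,\dx\dt+\tfrac12\|y^\eta-\tilde y^\eta\|_{L^2(Q_\tau)}^2,
\]
approximate $y^\eta-\tilde y^\eta$ by $\zeta^\eta$, and rewrite the resulting principal contribution as $\langle p_{\bar u}(\tau),\eta\rangle_{L^2(\Omega)}$ via the adjoint identity. The remainder is bounded by $C\|\tilde y^\eta-y_{\bar u}\|_{L^2(Q_\tau)}\|\eta\|_{L^2(\Omega)}+o(\|\eta\|_{L^2(\Omega)})$; combined with $\mathrm{II}\ge c\|\tilde y^\eta-y_{\bar u}\|_{L^2(Q_\tau)}^2$, Young's inequality $Cab\le\tfrac{c}{2}a^2+\tfrac{C^2}{2c}b^2$ absorbs the cross term, leaving only an $O(\|\eta\|_{L^2(\Omega)}^2)=o(\|\eta\|_{L^2(\Omega)})$ residual.

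\textbf{Main obstacle.} The delicate step is showing $\|y^\eta-\tilde y^\eta-\zeta^\eta\|_{L^2(Q_\tau)}=o(\|\eta\|_{L^2(\Omega)})$, since the natural linearization of $y^\eta-\tilde y^\eta$ involves the potential $f'(\xi^\eta)$ for some $\xi^\eta$ between $y^\eta$ and $\tilde y^\eta$, not $f'(y_{\bar u})$. The difference with $\zeta^\eta$ solves a linear parabolic equation whose source involves $\bigl(f'(y_{\bar u})-f'(\xi^\eta)\bigr)(y^\eta-\tilde y^\eta)$; controlling this in $L^2(Q_\tau)$ requires combining $\|\xi^\eta-y_{\bar u}\|\to 0$ (which follows from the first-order Lipschitz step above) with the uniform $L^\infty$-bounds on the states afforded by the preliminary results (\cref{Prop3.1,Prop3.2}) and the boundedness of $f''$. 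Once this residual estimate is in hand, the rest of the argument is a careful bookkeeping of the three error sources — the nonlinear PDE residual, the cross term absorbed by the growth condition, and the quadratic state term — each of which is $o(\|\eta\|_{L^2(\Omega)})$.
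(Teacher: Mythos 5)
Your proposal is correct in substance but organized quite differently from the paper's proof, and the comparison is instructive. The paper first proves a standalone Lipschitz stability estimate for the perturbed optimal states (\cref{stability}), namely $\|y_\eta-y_{\bar u}\|_{L^2(Q)}\le\kappa\|\eta\|_H$, and then runs a single decomposition $A+B_1+B_2$ in which the linearized state $z_\eta$ carries both the source $u_\eta-\bar u$ and the initial datum $\eta$, so that integration by parts produces $\int_Q p_{\bar u}(u_\eta-\bar u)+\langle p_{\bar u}(0),\eta\rangle$ and the first term is discarded by the variational inequality. You instead (i) get the upper bound for free by testing the perturbed problem with the competitor $\bar u$, which needs no information about minimizers of the perturbed problem at all, and (ii) for the lower bound split the increment through the intermediate state $\tilde y^\eta$ (same control, unperturbed initial datum), use the quadratic growth $\mathrm{II}\ge c\|\tilde y^\eta-y_{\bar u}\|_{L^2(Q_\tau)}^2$ as a coercive term, and absorb the cross term $C\|\tilde y^\eta-y_{\bar u}\|\,\|\eta\|$ by Young's inequality. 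This effectively inlines the absorption trick that the paper isolates inside the proof of \cref{stability}; your sensitivity state $\zeta^\eta$ is source-free, so the adjoint identity yields only $\langle p_{\bar u}(\tau),\eta\rangle$ and you never need the first-order condition explicitly. What the paper's route buys is a reusable stability theorem (it is needed again for \cref{thm2} and \cref{thm4}); what yours buys is a shorter, more self-contained differentiability argument.

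Two points need tightening. First, you invoke only the \emph{local} error bound of \cref{Proploc} on the restricted problem, but your step ``$\mathrm{II}=O(\|\eta\|_{L^2(\Omega)})$ hence $\|\tilde y^\eta-y_{\bar u}\|_{L^2(Q_\tau)}\to 0$'' requires the \emph{global} quadratic growth: from $\mathcal J_{\tau,y_{\bar u}(\tau)}(u^\eta)\to\mathcal J_{\tau,y_{\bar u}(\tau)}(\bar u)$ alone one cannot enter the $\delta$-neighborhood where the local bound applies. This is exactly what the paper's \cref{globalgrowth} and \cref{corlem} supply, using the strictness of the global minimizer and a weak-compactness argument; you should cite or reproduce that globalization. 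Second, the linearization error $y^\eta-\tilde y^\eta-\zeta^\eta$ cannot be made $o(\|\eta\|_{L^2(\Omega)})$ in $L^2(Q_\tau)$: the $L^s$--$L^1$ estimate of \cref{lemma:LsL1} only controls it in $L^s(Q_\tau)$ for $s<\nicefrac{(d+2)}{d}<2$, and the conclusion is rescued by pairing against $\tilde y^\eta-y_Q\in L^r(Q_\tau)$ with $\nicefrac1r+\nicefrac1s=1$ (using \cref{Prop3.2} and \cref{Assumption}(iii)), exactly as in the paper's treatment of $B_1$. With these two repairs your argument goes through.
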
	
	
	We mention that the proof we give for Theorem~\ref{thm3} allows us to consider differentiability of the value function in the $L^\infty(\Omega)$  sense. In fact, the proof becomes simpler in this case, as the perturbed states exhibit better regularity properties; for further discussion, we refer to  \Cref{ss_diff}.
	\smallbreak 
		\smallbreak
	In general, time differentiability of the value function cannot be expected, as a straightforward application of envelope theorems would show that the derivative of the optimal state appears in the expression for the derivative at differentiability points, and the state does not necessarily have this regularity.

	Using \cref{MA}, we present partial results concerning the existence of one-sided derivatives of the optimal state. The proof is given in Section \ref{proofss}.
	
	\begin{theorem}\label{thm2}
		{Suppose that $\bar u\in\Uad$ is a unique global minimizer of \eqref{P}} satisfying \cref{MA} and suppose that $y_Q\in C\big([0,T], L^2(\Omega)\big)$. 
		The following statements hold.
		\begin{itemize}
			\item[\em(i)] Let $\tau\in[0,T)$. If $\frac{\mathrm d^+y_{\bar u}}{\mathrm dt}(\tau)$ exists in $L^2(\Omega)$, then the function $t\mapsto\upsilon(t,y_{\bar u}(\tau))$ is right differentiable at $\tau$ and its right derivative is given by  
            {
			\begin{align*}
				\frac{\mathrm d^+\upsilon}{\mathrm dt}\big(\tau, y_{\bar u}(\tau)\big)=-\int_\Omega \Big[\frac{1}{2} |y_{\bar u}(\tau,\bx)-y_{Q}(\tau,\bx)|^2+p_{\bar u}(\tau,\bx)\,\frac{\mathrm d^+ y_{\bar u}}{\mathrm dt}(\tau,\bx)\Big]\,\dx.
			\end{align*}
            }
			\item[\em(ii)] Let $\tau\in(0,T]$. If $\frac{\mathrm d^-y_{\bar u}}{\mathrm dt}(\tau)$ exists in $L^2(\Omega)$, then the function $t\mapsto\upsilon(t,y_{\bar u}(\tau))$ is left differentiable at $\tau$ and its left derivative is given by   
			\begin{align*}
				\frac{\mathrm d^-\upsilon}{\mathrm dt}\big(\tau, y_{\bar u}(\tau)\big)=- \int_\Omega \Big[\frac{1}{2}|y_{\bar u}(\tau,\bx)-y_{Q}(\tau,\bx)|^2+p_{\bar u}(\tau,\bx)\,\frac{\mathrm d^- y_{\bar u}}{\mathrm dt}(\tau,\bx)\Big]\,\dx.
			\end{align*}
		\end{itemize}
	\end{theorem}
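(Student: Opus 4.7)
The plan is to prove part~(i); part~(ii) follows by the symmetric argument. For $h \in (0, T-\tau]$, I will decompose
\begin{align*}
\upsilon(\tau+h, y_{\bar u}(\tau)) - \upsilon(\tau, y_{\bar u}(\tau)) = I_1(h) + I_2(h),
\end{align*}
with $I_1(h) \colonequals \upsilon(\tau+h, y_{\bar u}(\tau+h)) - \upsilon(\tau, y_{\bar u}(\tau))$ absorbing the time shift along the optimal trajectory, and $I_2(h) \colonequals \upsilon(\tau+h, y_{\bar u}(\tau)) - \upsilon(\tau+h, y_{\bar u}(\tau+h))$ absorbing the spatial displacement from $y_{\bar u}(\tau+h)$ to $y_{\bar u}(\tau)$. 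This is the natural splitting induced by Bellman's principle.

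The first term $I_1(h)$ is handled directly: a standard concatenation argument together with the global optimality of $\bar u$ shows that $\bar u|_{[\tau+h, T]}$ is itself a global minimizer of $\mathbf P_{\tau+h,\, y_{\bar u}(\tau+h)}$, whence $\upsilon(\tau+h, y_{\bar u}(\tau+h)) = \tfrac12 \int_{\tau+h}^T\!\!\int_\Omega |y_{\bar u} - y_Q|^2 \dx\dt$, and therefore $I_1(h) = -\tfrac12 \int_\tau^{\tau+h}\!\!\int_\Omega |y_{\bar u} - y_Q|^2 \dx\dt$. Since $y_{\bar u} \in W(0,T) \subset C([0,T]; L^2(\Omega))$ and $y_Q \in C([0,T]; L^2(\Omega))$ by hypothesis, I obtain $I_1(h)/h \to -\tfrac12 \int_\Omega |y_{\bar u}(\tau) - y_Q(\tau)|^2 \dx$. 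For $I_2(h)$ I plan to invoke \cref{thm3} applied at the pair $(\tau+h, y_{\bar u}(\tau+h))$ to obtain the envelope expansion
\begin{align*}
I_2(h) = \int_\Omega p_{\bar u}(\tau+h, \bx)\bigl(y_{\bar u}(\tau, \bx) - y_{\bar u}(\tau+h, \bx)\bigr) \dx + o\bigl(\|y_{\bar u}(\tau) - y_{\bar u}(\tau+h)\|_{L^2(\Omega)}\bigr).
\end{align*}
The hypothesis that $\frac{\mathrm d^+ y_{\bar u}}{\mathrm dt}(\tau)$ exists in $L^2(\Omega)$ gives $y_{\bar u}(\tau+h) - y_{\bar u}(\tau) = h\,\frac{\mathrm d^+ y_{\bar u}}{\mathrm dt}(\tau) + o(h)$ in $L^2(\Omega)$, while the continuity of $t \mapsto p_{\bar u}(t)$ in $L^2(\Omega)$ (inherited from $p_{\bar u}\in W(0,T)$) lets me replace $p_{\bar u}(\tau+h)$ by $p_{\bar u}(\tau)$ in the limit, producing $I_2(h)/h \to -\int_\Omega p_{\bar u}(\tau, \bx)\,\frac{\mathrm d^+ y_{\bar u}}{\mathrm dt}(\tau, \bx) \dx$. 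Summing the two limits delivers the formula in~(i); the argument for~(ii) is verbatim after replacing $h$ by $-h$ and adjusting signs.

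The main obstacle I anticipate is the legitimacy of applying \cref{thm3} at the shifted point $(\tau+h, y_{\bar u}(\tau+h))$: one must verify that $\bar u|_{[\tau+h, T]}$ is a strict global minimizer of $\mathbf P_{\tau+h,\, y_{\bar u}(\tau+h)}$ satisfying \cref{MA}. The minimality is Bellman's principle; the second-order condition should be inherited by extending perturbations $v \in \Uthad - \bar u|_{[\tau+h, T]}$ by zero to $[0,T]$, since the associated linearized state vanishes on $[0, \tau+h]$ and coincides with the restricted linearized state on $[\tau+h, T]$, while the adjoint state of the restricted problem is simply $p_{\bar u}|_{[\tau+h, T]}$, so the constants $c,\delta$ of \cref{MA} transfer verbatim. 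Should this envelope route prove troublesome, an alternative is to bound $I_2(h)$ from above by testing $\mathcal J_{\tau+h, y_{\bar u}(\tau)}$ against $\bar u|_{[\tau+h, T]}$, and from below by means of the error bound in \cref{Proploc}; both bounds can then be reconciled to the same envelope term by integration by parts against the adjoint equation for $p_{\bar u}$.
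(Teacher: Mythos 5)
Your decomposition is genuinely different from the paper's. The paper never splits off the spatial displacement: it takes a minimizer $u_h$ of $(\mathbf P_{\tau+h,\,y_{\bar u}(\tau)})$ directly, proves $\|y_h-y_{\bar u}\|^2_{L^2(Q_{\tau+h})}=o(h)$ via \cref{stability}, and then derives matching lower and upper bounds on $\upsilon(\tau+h)-\upsilon(\tau)$ by integrating by parts against $p_{\bar u}$ (for the liminf) and against $p_h$ (for the limsup), each time discarding a sign-definite term coming from the relevant first-order condition. Your route --- Bellman's principle for the time shift plus \cref{thm3} for the spatial shift --- is conceptually cleaner, and the treatment of $I_1$ is correct.

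The $I_2$ step, however, has a genuine gap. \cref{thm3} asserts Fr\'echet differentiability of $\upsilon(s,\cdot)$ at $y_{\bar u}(s)$ for each \emph{fixed} $s$; the remainder $o(\|\eta\|)$ it provides comes with a modulus that may a priori depend on $s$. In $I_2(h)$ you apply this expansion at the moving base point $s=\tau+h$ with the $h$-dependent perturbation $\eta_h=y_{\bar u}(\tau)-y_{\bar u}(\tau+h)$ and then let $h\to0^+$; to conclude that the remainder is $o(h)$ you need the modulus to be uniform in $h$, which is not part of the statement of \cref{thm3} and which you do not establish. The gap is repairable --- tracing the proof of \cref{thm3} shows the remainder is in fact bounded by $C\|\eta\|^2$ with $C$ coming from the constants of \cref{stability} and \cref{lemma:LsL1}, which the paper arranges to be independent of the initial time --- but this must be said explicitly; invoking \cref{thm3} as a black box does not justify the expansion. (Your concern about transferring \cref{MA} to the restricted problem is, by contrast, unnecessary: \cref{thm3} is already stated for every $\tau\in[0,T]$ under hypotheses on $\bar u$ alone, so it applies at $\tau+h$ directly.)
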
	

	Using \cref{MA} and combining the arguments from the previous results, one can show that the value function is Fréchet-differentiable with respect to both of its variables, provided the optimal state is time-differentiable in $L^2(\Omega)$. The differential can then be computed using the formulas from the previous two results.
	
	\begin{theorem}\label{thm4}
		{Suppose that $\bar u\in\Uad$ is  a unique global minimizer of \eqref{P}}  satisfying \cref{MA} and suppose that $y_Q\in C\big([0,T], L^2(\Omega)\big)$. Let $\tau\in[0,T]$ be such that $y_{\bar u}:[0,T]\to L^2(\Omega)$ is differentiable at $\tau$. Then the value function $v:[0,T]\times L^2(\Omega)\to\Rb$ is differentiable at $(\tau, y_{\bar u}(\tau))$. 
	\end{theorem}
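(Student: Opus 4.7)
The plan is to reduce \cref{thm4} to the two preceding theorems by decomposing the joint increment into a pure time slice and a pure space slice evaluated at the shifted initial time. Writing $h\in\Rb$ and $\eta\in L^2(\Omega)$ with $\tau+h\in[0,T]$, I would split
\begin{align*}
\upsilon\big(\tau+h,y_{\bar u}(\tau)+\eta\big)-\upsilon\big(\tau,y_{\bar u}(\tau)\big)
&=\big[\upsilon\big(\tau+h,y_{\bar u}(\tau)\big)-\upsilon\big(\tau,y_{\bar u}(\tau)\big)\big]\\
&\quad +\big[\upsilon\big(\tau+h,y_{\bar u}(\tau)+\eta\big)-\upsilon\big(\tau+h,y_{\bar u}(\tau)\big)\big].
\end{align*}
Since $y_{\bar u}:[0,T]\to L^2(\Omega)$ is differentiable at $\tau$, both one-sided derivatives exist and coincide with $\frac{\mathrm dy_{\bar u}}{\mathrm dt}(\tau)$, so \cref{thm2} applied from both sides shows that the first bracket equals $\alpha h+o(h)$ with $\alpha=-\frac12\|y_{\bar u}(\tau)-y_Q(\tau)\|_{L^2(\Omega)}^2-\langle p_{\bar u}(\tau),\frac{\mathrm dy_{\bar u}}{\mathrm dt}(\tau)\rangle_{L^2(\Omega)}$.

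For the second bracket I would invoke \cref{thm3} at the perturbed initial time $\tau+h$: by Bellman's optimality principle $\bar u|_{[\tau+h,T]}$ is a strict global minimizer of $(\mathbf P_{\tau+h,y_{\bar u}(\tau+h)})$, and \cref{MA} is inherited for the restricted problem (the same growth constant works on the restricted cone of critical directions). Hence $\upsilon(\tau+h,\cdot)$ is Fr\'echet-differentiable at $y_{\bar u}(\tau+h)$ with gradient $p_{\bar u}(\tau+h)$. Setting $\xi\colonequals y_{\bar u}(\tau)+\eta-y_{\bar u}(\tau+h)$ and using $y_{\bar u}(\tau+h)-y_{\bar u}(\tau)=h\,\frac{\mathrm dy_{\bar u}}{\mathrm dt}(\tau)+o(h)$ in $L^2(\Omega)$, one has $\|\xi\|_{L^2(\Omega)}=O(|h|+\|\eta\|_{L^2(\Omega)})$ and
\begin{align*}
\upsilon\big(\tau+h,y_{\bar u}(\tau)+\eta\big)-\upsilon\big(\tau+h,y_{\bar u}(\tau)\big)
=\big\langle p_{\bar u}(\tau+h),\eta\big\rangle_{L^2(\Omega)}+o\big(|h|+\|\eta\|_{L^2(\Omega)}\big),
\end{align*}
after applying the expansion from \cref{thm3} at the two points $y_{\bar u}(\tau+h)+\xi$ and $y_{\bar u}(\tau+h)+(y_{\bar u}(\tau)-y_{\bar u}(\tau+h))$ and subtracting. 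The continuous embedding $W(0,T)\hookrightarrow C([0,T];L^2(\Omega))$ gives $p_{\bar u}(\tau+h)\to p_{\bar u}(\tau)$ in $L^2(\Omega)$, so Cauchy--Schwarz turns $\langle p_{\bar u}(\tau+h)-p_{\bar u}(\tau),\eta\rangle$ into a term of order $o(1)\,\|\eta\|_{L^2(\Omega)}=o(|h|+\|\eta\|_{L^2(\Omega)})$. Adding the two slices produces the expected expansion with differential $(\alpha,p_{\bar u}(\tau))\in\Rb\times L^2(\Omega)$.

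The main technical obstacle is the quantifier order hidden in the second bracket: \cref{thm3} yields, for each fixed $\tau+h$, a little-$o$ error whose modulus a priori depends on $\tau+h$, whereas joint Fr\'echet differentiability requires this error to be controlled uniformly as $h\to 0$. My plan is to handle this by inspecting the proof of \cref{thm3}: its error estimate is driven by the quadratic error bound in \cref{Proploc} together with the continuity of the PDE-to-state and the PDE-to-adjoint maps. All constants entering these estimates (the growth constant $c$ from \cref{MA}, the Lipschitz constants from \cref{Prop3.1}--\cref{Prop3.2}, and the norm of $p_{\bar u}$) depend continuously on the initial time through $y_{\bar u},p_{\bar u}\in C([0,T];L^2(\Omega))$, so one obtains an error of the form $\omega(|h|+\|\xi\|_{L^2(\Omega)})\cdot\|\xi\|_{L^2(\Omega)}$ with a modulus $\omega$ independent of $h$ for $|h|$ small. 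Once this uniformity is in place, the decomposition above yields exactly the required bound, and differentiability of $\upsilon$ at $(\tau,y_{\bar u}(\tau))$ with the announced derivative follows at once.
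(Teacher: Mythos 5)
Your proposal is correct in substance, but it is organized differently from what the paper intends by ``the same arguments as in the proofs of \cref{thm3} and \cref{thm2}''. The paper's machinery --- in particular \cref{stability} and \cref{esobfu} --- is already formulated for a \emph{simultaneous} perturbation of the initial time and the initial datum: it yields $\|y^{\tau+h,y_{\bar u}(\tau)+\eta}_{u_{h,\eta}}-y_{\bar u}\|_{L^2(Q_{\tau+h})}\le \kappa\big(\|y_{\bar u}(\tau+h)-y_{\bar u}(\tau)\|_H+\|\eta\|_H\big)=O(|h|+\|\eta\|_H)$ with $\kappa$ independent of $h$ and $\eta$. The intended proof therefore runs the sub-/super-differentiability computation of \cref{thm3} and \cref{thm2} directly on the joint increment $\upsilon(\tau+h,y_{\bar u}(\tau)+\eta)-\upsilon(\tau,y_{\bar u}(\tau))$, so the uniformity issue you flag never arises. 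Your two-slice decomposition instead applies \cref{thm2} to the time slice and \cref{thm3} at the moving base point $y_{\bar u}(\tau+h)$, which forces you to control the remainder of \cref{thm3} uniformly as the initial time varies. You correctly identify this as the crux, and your fix is viable --- but the cleanest justification is not ``continuity of the constants in the initial time'' in the abstract; it is that \cref{corlem} and \cref{stability} already provide growth and stability constants that are \emph{uniform} over all initial times (a single $\gamma$, a single $\kappa$ for all $h\in[0,T-\tau]$), and the remainder in the proof of \cref{thm3} is quadratic in the state distance, hence bounded by $C(|h|+\|\eta\|_H)^2$ with $C$ independent of $h$. Two small inaccuracies worth noting: you do not need to argue that \cref{MA} itself is ``inherited'' by the restricted problem (what propagates forward, via \cref{corlem}, is the quadratic error bound in the states, and the statement of \cref{thm3} already covers every initial time under the hypotheses on the original problem); and strictness of $\bar u|_{[\tau+h,T]}$ follows by concatenation with $\bar u$ on $[0,\tau+h)$, not from Bellman's principle alone. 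With those adjustments your route closes the argument and yields the same differential $\big(\alpha,p_{\bar u}(\tau)\big)$.
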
	

    \begin{proof}
        	The result follows by the same arguments as in the proofs of \cref{thm3} and \cref{thm2}. 
    \end{proof}
	
	\begin{remark} 
	    \begin{enumerate}
	        \item [(i)] With additional assumptions, the differentiability can be extended to a neighborhood of the graph of the trajectory; we discuss this in more detail in \Cref{diffneigh}.
            \item [(ii)] Differentiability of the value function is, for example, an essential prerequisite for the convergence of policy iteration, which is used to calculate a numerical solution to the HJB equation. In particular, the article \cite{KK24} also considers control constraints, as we do. If \cref{thm3,thm2,thm4} apply, then after a suitable discretization of \eqref{P}, the results from \cite{KK24} are applicable.
	    \end{enumerate}
	\end{remark}

	\section{Preliminaries} 
	
	\subsection{Well-posedness of the  family of  problems}
	
	In this subsection, we provide a precise formulation of the assumptions underlying the reference control problem introduced in the previous section. 
	
	\subsubsection*{Some functional spaces}
	
	Recall that we are considering a bounded domain $\Omega\subset\Rb^d$ for $d\in\{1,2,3\}$ with Lipschitz boundary $\partial\Omega$ and a time horizon $T>0$. We denote by $C^{\infty}_0(\Omega)$ the space of smooth functions with compact support; its topological dual is the space of distributions. The Sobolev spaces $W_0^{s,p}(\Omega)$ have their usual meaning.  Following  classic conventions, we denote $H^1_0(\Omega)\colonequals W_{0}^{1,2}(\Omega)$ and $H^{-1}(\Omega)\colonequals H^1_0(\Omega)^*$. Let $I\subseteq [0,T]$ be a closed interval. {Given a Banach space $X$}, $L^p(I;X)$ is the usual space of $p$-integrable functions in the Bochner sense, and $C(I;X)$ is the space of continuous functions from the closed interval $I$ into $X$. The primary function space in the weak formulation of parabolic PDEs over $I$ is the space $W(I)$; it consists of functions $y:I\to H^{-1}(\Omega)$ such that 
    {
	\begin{itemize}
		\item[(i)] $\frac{\mathrm dy}{\mathrm dt}(t)\in H^{-1}(\Omega)$ exists for almost all $t\in I$ and $\int_{I}\|\frac{\mathrm dy}{\mathrm dt}(t)\|^2_{H^{-1}(\Omega)}\,\mathrm dt<+\infty$.
		\item[(ii)] $y(t)\in H^1_0(\Omega)$ for almost all $t\in I$ and $\int_{I}\|y(t)\|^2_{H^{1}_0(\Omega)}\,\mathrm dt<+\infty$.
	\end{itemize}
    }
	It is well-known that $W(I)$ is a Hilbert space with continuous embedding $W(I)\hookrightarrow C(I;L^2(\Omega))$; see, e.g., \cite[Theorem 3.10]{Tro2010}. 
	\smallbreak 
	From now on, we will denote $V\colonequals H_0^1(\Omega)$ and $H\colonequals L^2(\Omega)$. Recall that $(V,H,V^*)$ is a Gelfand triple. The duality pairing of $V$ and $V^*$ is compatible with the inner product on $H$ (which is by Riesz representation the duality pairing between $H$ and $H^*$) in the sense that
	\begin{align*}
		{\langle y,p\rangle}_{V^*,V}  ={\langle y,p\rangle}_{H^*,H} \quad \text{for all $y\in H$ and $p\in V$}.
	\end{align*}	
	We slightly abuse the notation and write simply $\langle \cdot\,,\cdot\rangle$ for both duality pairings. It is known  that for any two elements $y,p\in W(I)$, the integration by parts formula
    {
	\begin{align*}
		\int_0^T \Big\langle \frac{d y}{d t}(t), p(t)\Big\rangle\, \dt = {\big\langle y(T),p(T)\big\rangle}-{\big\langle y(0),p(0)\big\rangle}-\int_0^T\Big\langle \frac{dp}{dt}(t),y(t)\Big\rangle\, \dt
	\end{align*}
    }
	holds, see \cite[Theorem 3.11]{Tro2010} for more details. 
	
	\subsubsection*{Weak solutions of  parabolic PDEs and existence of minimizers}
	
	Let $\tau\in[0,T]$ and $\eta\in H$ be given. Given $u\in\Utad$, we say that $y_{u}^{\tau,\eta}\in W(I)$ is a weak solution of \eqref{eq:taustate} if $y_u^{\tau,\eta}(\tau)=\eta$ and for any $\varphi\in H^1\big(\tau,T;H^1(\Omega)\big)$ satisfying $\varphi(T)=0$ there holds 
	\begin{align}
		\label{pointwiseweak1}
		\int_{Q_\tau}\Big[y\frac{\mathrm d\varphi}{\mathrm dt} +\langle A\nabla y,\nabla \varphi\rangle+ f(y)\varphi\Big]\,\dx\dt = \int_{Q_\tau} u\varphi\,d\bx\,dt + \int_{\Omega}\eta \varphi(0)\,\dx.
	\end{align}
	We refer to \cite[pp. 266-267]{Tro2010} for more details on weak formulations.   
	
	Under \cref{Assumption}, it is possible to ensure the existence of weak solutions for the {state equation}.

	\begin{proposition}\label{Prop3.1}
		For any $\tau\in[0,T]$ and $u\in\Utad$, equation  \eqref{eq:taustate} possesses a unique weak solution  $y_{u}^{\tau,\eta}\in W(\tau,T)$. Moreover, there exists $c>0$  such that
		\begin{align*}
			{\| y_{u}^{\tau,\eta}\|}_{W(\tau,T)} \le c\, \left({\|u-f(0)\|}_{L^2(Q_\tau)} + {\|\eta\|}_H\right)\quad\text{for all } u\in\Utad. 
		\end{align*}
	\end{proposition}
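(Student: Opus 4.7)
The plan is to combine classical linear parabolic theory with the monotonicity coming from $f'\ge 0$. First, I would reduce to a zero-equilibrium setting by introducing $\tilde f(y)\colonequals f(y)-f(0)$ and $\tilde u\colonequals u-f(0)$, so that \eqref{eq:taustate} becomes $\partial_t y-\dive(A\nabla y)+\tilde f(y)=\tilde u$ with $\tilde f(0)=0$. Since $u\in\Utad\subset L^\infty(Q_\tau)$ and $\Omega$ is bounded, $\tilde u\in L^2(Q_\tau)$, and the shifted nonlinearity inherits $\tilde f'\ge 0$ and the global Lipschitz bound $\|f'\|_{L^\infty(\Rb)}$. Monotonicity together with $\tilde f(0)=0$ gives the sign property $\tilde f(y)y\ge 0$, which is the crucial ingredient for the a priori estimate.

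For existence, I would run a Faedo-Galerkin scheme with a Hilbert basis of $V$ (for instance, Dirichlet eigenfunctions of $-\dive(A\nabla\cdot)$). The finite-dimensional projected problem is an ODE system with Lipschitz right-hand side, hence admits a unique solution $y_n$ on $[\tau,T]$. Uniform a priori bounds come from testing against $y_n$: the coercivity of $A$ together with Poincar\'e gives $\langle A\nabla y_n,\nabla y_n\rangle\ge\alpha\|y_n\|_V^2$, the nonlinear term $\int_\Omega \tilde f(y_n)y_n\,\dx\ge 0$ is discarded, and $\int_\Omega \tilde u\, y_n\,\dx$ is absorbed via Young's inequality. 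Integrating in time from $\tau$ and applying Gr\"onwall yields
\begin{align*}
\|y_n\|_{L^\infty(\tau,T;H)}^2+\|y_n\|_{L^2(\tau,T;V)}^2\le C\bigl(\|\tilde u\|_{L^2(Q_\tau)}^2+\|\eta\|_H^2\bigr).
\end{align*}
Reading $\tfrac{dy_n}{dt}$ off the equation and using boundedness of $f'$ to control $\tilde f(y_n)$ in $L^2$ provides a uniform bound in $L^2(\tau,T;V^*)$, so $\{y_n\}$ is bounded in $W(\tau,T)$. Weak convergence along a subsequence, combined with Aubin-Lions compactness to get strong convergence in $L^2(Q_\tau)$, allows passage to the limit in the nonlinear term via Lipschitz continuity of $\tilde f$, producing a weak solution; weak lower semicontinuity then preserves the stated estimate, with the constant unshifted back to the form in the proposition since $\|\tilde u\|_{L^2(Q_\tau)}=\|u-f(0)\|_{L^2(Q_\tau)}$.

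For uniqueness, given two solutions $y_1,y_2$, the difference $w\colonequals y_1-y_2$ satisfies $w(\tau)=0$ and the linearized homogeneous equation with source $-(f(y_1)-f(y_2))$. Testing against $w$ (justified by the integration-by-parts formula in $W(\tau,T)$ recalled in the preliminaries) gives
\begin{align*}
\tfrac{1}{2}\tfrac{d}{dt}\|w\|_H^2+\langle A\nabla w,\nabla w\rangle+\int_\Omega\bigl(f(y_1)-f(y_2)\bigr)w\,\dx=0.
\end{align*}
By the mean value theorem and $f'\ge 0$, the last integral is nonnegative, so $\|w(t)\|_H^2$ is nonincreasing and hence identically zero. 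The only genuinely delicate point in the whole argument is the nonlinear limit passage in the Galerkin step, and this is routine given Aubin-Lions compactness and the Lipschitz continuity of $f$; I do not expect any deeper obstacle.
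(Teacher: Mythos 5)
Your argument is correct. Note, however, that the paper does not actually prove this proposition: it disposes of it in one line by citing \cite[Lemma 5.3 on p.~373]{Tro2010} and \cite[Lemma 7.10]{Tro2010}, which supply existence/uniqueness for the semilinear monotone parabolic equation and the a priori bound in $W(\tau,T)$, respectively. What you have written is a self-contained Faedo--Galerkin substitute for that citation, and all the essential ingredients are in place: the shift $\tilde f(y)=f(y)-f(0)$, $\tilde u=u-f(0)$ to exploit the sign condition $\tilde f(y)y\ge 0$ coming from $f'\ge 0$; the energy estimate with the monotone term discarded; the $V^*$-bound on $\tfrac{dy_n}{dt}$ read off the equation using the global Lipschitz bound on $f$; Aubin--Lions plus Lipschitz continuity for the nonlinear limit passage; and uniqueness by testing the difference equation with $w$ and using monotonicity. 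Two small points worth making explicit if you were to write this out in full: the Galerkin initial datum should be the projection $P_n\eta$, with $\|P_n\eta\|_H\le\|\eta\|_H$ for the eigenfunction basis you chose (this is what keeps the constant uniform in $n$); and the uniform $V^*$-bound on $\tfrac{dy_n}{dt}$ uses that $P_n$ is uniformly bounded as an operator on $V^*$, which again is guaranteed by taking Dirichlet eigenfunctions of $-\dive(A\nabla\cdot)$ rather than an arbitrary basis. Neither point is a gap, just a detail to record. The trade-off is the usual one: the citation is shorter, while your construction makes the dependence of the constant $c$ on the data (coercivity constant of $A$, $\|f'\|_{L^\infty}$, $T-\tau$) completely transparent, which is in the spirit of the $\tau$-uniformity arguments used later in \cref{lemma:LsL1}.
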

	
	\begin{proof}
		See \cite[Lemma 5.3 on p. 373]{Tro2010} and \cite[Lemma 7.10]{Tro2010}.
	\end{proof}
	
	Let us now point out that the objective functionals $\mathcal J_{\tau,\eta}$ are well defined as each state $y_{u}^{\tau,\eta}$ and the target state $y_Q$ both belong to $L^2(Q)$. One can achieve a further refinement of the regularity of states beyond mere square integrability.
	
	\begin{proposition} \label{Prop3.2}
		For any $\eta\in H$, $\tau\in[0,T]$ and $u\in\Utad$, the state $y_{u}^{\tau,\eta}$ belongs to $L^{\nicefrac{2(d+2)}{d}}(Q_\tau)$. 
	\end{proposition}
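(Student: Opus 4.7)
The plan is to combine the regularity $y_u^{\tau,\eta}\in W(\tau,T)$ guaranteed by \cref{Prop3.1} with a standard Gagliardo-Nirenberg interpolation in space; no new PDE estimates beyond those already stated will be needed, so this is essentially a functional-analytic embedding result.

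First, I would observe that by the very definition of $W(\tau,T)$ and the continuous embedding $W(\tau,T)\hookrightarrow C([\tau,T];H)$ recalled in the preliminaries, every $y\in W(\tau,T)$ satisfies
\[
y\in L^\infty(\tau,T;H)\cap L^2(\tau,T;V),
\]
with norms controlled by $\|y\|_{W(\tau,T)}$. Then, setting $p\colonequals\nicefrac{2(d+2)}{d}$, I would invoke the Gagliardo-Nirenberg inequality: for $d\in\{1,2,3\}$ there exists $C>0$ with
\[
\|\varphi\|_{L^p(\Omega)}\le C\,\|\varphi\|_H^{2/(d+2)}\,\|\varphi\|_V^{d/(d+2)}\quad\text{for all }\varphi\in V.
\]
Raising this estimate, evaluated at $\varphi=y_u^{\tau,\eta}(t)$, to the $p$-th power, and using the key algebraic identity $pd/(d+2)=2$, integration over $(\tau,T)$ yields
\[
\int_\tau^T\|y_u^{\tau,\eta}(t)\|_{L^p(\Omega)}^p\,\mathrm dt\le C^p\,\|y_u^{\tau,\eta}\|_{L^\infty(\tau,T;H)}^{2p/(d+2)}\,\|y_u^{\tau,\eta}\|_{L^2(\tau,T;V)}^2,
\]
and the right-hand side is finite thanks to \cref{Prop3.1}. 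This delivers $y_u^{\tau,\eta}\in L^{2(d+2)/d}(Q_\tau)$.

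I do not expect any serious obstacle. The only delicate case is $d=2$, where $V$ does not embed into $L^\infty(\Omega)$; however the classical Ladyzhenskaya inequality delivers exactly the required exponent $p=4$ in that dimension, and the argument above goes through verbatim once it is invoked in place of the embedding used for $d=3$. Alternatively, the whole claim may be viewed as the well-known parabolic embedding $W(\tau,T)\hookrightarrow L^{2(d+2)/d}(Q_\tau)$, which is available in standard references on linear parabolic equations and could simply be cited.
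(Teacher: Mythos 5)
Your proposal is correct and follows essentially the same route as the paper: the paper simply cites the continuous embedding $L^2(\tau,T;V)\cap L^\infty(\tau,T;H)\hookrightarrow L^{\nicefrac{2(d+2)}{d}}(Q_\tau)$ from the literature, while you supply its standard proof via the Gagliardo--Nirenberg interpolation with the correct exponents $\theta=\nicefrac{d}{(d+2)}$. The only cosmetic remark is that your worry about $d=2$ is unnecessary, since the interpolation inequality with $p=4$ (Ladyzhenskaya) is exactly the $d=2$ instance of the same Gagliardo--Nirenberg estimate and no $L^\infty(\Omega)$ embedding is ever needed.
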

	
	\begin{proof}
		By \cite[Lemma 2.2]{CW_2023}, the space $L^2(\tau,T;V)\cap L^\infty(\tau,T; H)$ is continuously embedded in $L^{\nicefrac{2(d+2)}{d}}(Q_\tau)$, whence the result follows.
	\end{proof}
	  {The proof of the next proposition follows standard arguments. We give a short sketch of the arguments needed, since here we need to account for the situation where the initial condition is only in $L^2(\Omega)$.}
	\begin{proposition}\label{prop3.3}
		Each problem \cref{Pteta} admits a global minimizer. 
	\end{proposition}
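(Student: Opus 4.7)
The plan is to apply the direct method of the calculus of variations. Since $\mathcal{J}_{\tau,\eta}\ge 0$, the infimum $\alpha\colonequals\inf\{\mathcal{J}_{\tau,\eta}(u)\mid u\in\Utad\}$ is finite, so I would pick a minimizing sequence $\{u_n\}\subset\Utad$. Because $\ua,\ub\in L^\infty(Q)$, this sequence is bounded in $L^\infty(Q_\tau)$, hence in $L^2(Q_\tau)$. Extracting a subsequence (not relabeled), I obtain $u_n\weakly\bar u$ in $L^2(Q_\tau)$; since $\Utad$ is convex and norm-closed in $L^2(Q_\tau)$, Mazur's lemma gives $\bar u\in\Utad$.

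Next I would extract a strongly convergent subsequence of the associated states $y_n\colonequals y^{\tau,\eta}_{u_n}$. By the stability bound of \cref{Prop3.1}, $\{y_n\}$ is bounded in $W(\tau,T)$. Combining the compact embedding $V\hookrightarrow H$ with the continuous embedding $H\hookrightarrow V^*$, the Aubin--Lions lemma yields a compact embedding $W(\tau,T)\hookrightarrow L^2(\tau,T;H)=L^2(Q_\tau)$. Thus, along a further subsequence, $y_n\to\bar y$ strongly in $L^2(Q_\tau)$ and $y_n\weakly\bar y$ in $W(\tau,T)$; in particular $\bar y(\tau)=\eta$, because the trace map $W(\tau,T)\to H$ at $t=\tau$ is weakly continuous.

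Then I would pass to the limit in the weak formulation \cref{pointwiseweak1} for $(u_n,y_n)$. The linear terms converge by the weak convergence of $y_n$ in $W(\tau,T)$ and of $u_n$ in $L^2(Q_\tau)$. Since $f'$ is bounded by \cref{Assumption}(ii), the map $f$ is globally Lipschitz on $\Rb$, so $f(y_n)\to f(\bar y)$ strongly in $L^2(Q_\tau)$. This identifies $\bar y=y^{\tau,\eta}_{\bar u}$. Finally, strong $L^2(Q_\tau)$ convergence of the states together with the quadratic form of $\mathcal{J}_{\tau,\eta}$ yields $\mathcal{J}_{\tau,\eta}(\bar u)=\lim_n\mathcal{J}_{\tau,\eta}(u_n)=\alpha$, so $\bar u\in\Utad$ is a global minimizer.

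The main technical point will be passing to the limit in the nonlinear term $f(y_n)$: the low regularity $\eta\in H$ excludes appealing to uniform $L^\infty$ bounds on the states, but the Aubin--Lions compactness in $L^2(Q_\tau)$ combined with the global Lipschitz property of $f$ (guaranteed by the boundedness of $f'$ in \cref{Assumption}(ii)) is precisely what makes the compactness argument go through at the $W(\tau,T)$ level, which is the reason this proposition merits a short sketch in the paper.
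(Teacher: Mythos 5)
Your proposal is correct and follows essentially the same route as the paper's proof: a minimizing sequence, weak $L^2$ compactness of the controls with weak closedness of the convex bounded set $\Utad$, the a priori bound of \cref{Prop3.1} giving boundedness in $W(\tau,T)$, Aubin--Lions compactness for strong $L^2(Q_\tau)$ convergence of the states, the global Lipschitz property of $f$ to pass to the limit in the nonlinearity, and identification of the limit state by uniqueness. No substantive differences to report.
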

	
	\begin{proof}
{Let $I=[\tau,T]$ and recall $W(I)$ defined before. It is well-known that $W(I)\hookrightarrow C(I;L^2(\Omega))$ continuously; hence $y(\tau)\in L^2(\Omega)$ is well-defined. In particular, we recall the weak formulation for the state with $L^2$ initial condition, \eqref{pointwiseweak1}.
By Assumption~2.1, the box bounds satisfy $u_a,u_b\in L^\infty(Q)$; therefore
 $U^{\tau}_{\mathrm{ad}}$ is a nonempty, convex, closed, and bounded subset of $L^2(Q_\tau)$. For each $u\in U^{\tau}_{\mathrm{ad}}$ and each $\eta\in H$, there exists a unique weak solution $y_{\tau,\eta}^u\in W(\tau,T)$ of \eqref{pointwiseweak1} and the estimate $\|y_{\tau,\eta}^u\|_{W(\tau,T)}\ \le\ c\big(\|u-f(0)\|_{L^2(Q_\tau)}+\|\eta\|_{L^2(\Omega)}\big)$ holds. 
As a consequence, $y_{\tau,\eta}^u\in L^2(Q_\tau)$, so $J_{\tau,\eta}(u)$ is well-defined.
Choose a minimizing sequence $(u_k)\subset U^{\tau}_{\mathrm{ad}}$ with
$J_{\tau,\eta}(u_k)\to \inf_{U^{\tau}_{\mathrm{ad}}} J_{\tau,\eta}$.
Thus, $(u_k)$ is bounded in $L^2(Q_\tau)$; extracting a subsequence, denoted in the same way, $u_k \rightharpoonup u$ weakly in  $L^2(Q_\tau)$ for some $u\in U^{\tau}_{\mathrm{ad}}$. Let $y_k:=y_{\tau,\eta}^{u_k}$ and $y:=y_{\tau,\eta}^{u}$. 
Then $(y_k)$ is bounded in $W(\tau,T)$ uniformly in $k$.  Up to a subsequence, by the Aubin-Lions theorem and the uniform bound in $W(\tau,T)$
\[
y_k \to \widehat y \ \text{ strongly in }L^2(Q_\tau),\qquad
y_k \rightharpoonup \widehat y \ \text{ weakly in }W(\tau,T).
\]
Since $f\in C^2$ with bounded derivative by \cref{Assumption}, $f$ is globally Lipschitz; hence $f(y_k)\to f(\widehat y)$ in $L^2(Q_\tau)$. Passing to the limit in the weak formulation with $u_k\rightharpoonup u$ shows that $\widehat y$ solves the state equation with control $u$ and initial datum $\eta$, hence $\widehat y=y$ by uniqueness. In particular, $y_k \to y \quad \text{in }L^2(Q_\tau)$. By the strong convergence $y_k\to y$ in $L^2(Q_\tau)$, $J_{\tau,\eta}(u)\ \le\liminf_{k\to\infty} J_{\tau,\eta}(u_k)$. Thus $u$ is a global minimizer of $J_{\tau,\eta}$ over $U^{\tau}_{\mathrm{ad}}$.}
	\end{proof}

	\subsection{A technical result on stability}
	
	Let us begin with a lemma showing that local growth at a minimizer can become global, provided the minimizer is unique.
	
	\begin{lemma}
		\label{globalgrowth}
		Suppose that $\bar u\in\Uad$ is a unique global minimizer of \eqref{P} that satisfies \cref{MA}. Then there exists $\gamma>0$ such that
		\begin{align}
			\label{ebne}
			\mathcal J(u) - 	\mathcal J(\bar u) \ge \gamma\,{\|y_u-\bar y\|}_{L^2(Q)}^{2}\quad\text{for all }u\in \Uad. 
		\end{align}
	\end{lemma}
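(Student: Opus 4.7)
The strategy is to split the admissible set into a \emph{near} region, where the local growth estimate of \cref{Proploc} applies directly, and a \emph{far} region, where a contradiction/compactness argument together with strictness of the minimizer furnishes a uniform positive lower bound on $\mathcal{J}(u)-\mathcal{J}(\bar u)$. Combining the two estimates with the uniform $L^2(Q)$-boundedness of the state map yields the claimed global quadratic growth.

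First, let $\delta>0$ and $c>0$ be the constants from \cref{Proploc}, so that
\[
\mathcal{J}(u)-\mathcal{J}(\bar u)\ \ge\ c\,\|y_u-\bar y\|_{L^2(Q)}^2 \qquad \text{whenever } \|y_u-\bar y\|_{L^2(Q)}\le \delta.
\]
Since $u_a,u_b\in L^\infty(Q)$, the set $\Uad$ is bounded in $L^2(Q)$, and \cref{Prop3.1} yields a constant $M>0$ such that $\|y_u-\bar y\|_{L^2(Q)}\le M$ uniformly for all $u\in\Uad$. It remains to produce $\alpha>0$ such that
\[
\mathcal{J}(u)-\mathcal{J}(\bar u)\ \ge\ \alpha \qquad \text{whenever } \|y_u-\bar y\|_{L^2(Q)}>\delta;
\]
once this is available, setting $\gamma\colonequals\min\{c,\alpha/M^2\}$ concludes the proof, since in the far region $\alpha\ge (\alpha/M^2)\,\|y_u-\bar y\|_{L^2(Q)}^2$.

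To obtain $\alpha$, I argue by contradiction: suppose there exists a sequence $(u_n)\subset\Uad$ with $\|y_{u_n}-\bar y\|_{L^2(Q)}\ge\delta$ and $\mathcal{J}(u_n)-\mathcal{J}(\bar u)\to 0$. Since $\Uad$ is bounded in $L^2(Q)$, there is a subsequence (not relabeled) with $u_n\weakly u^\ast$ weakly in $L^2(Q)$ for some $u^\ast\in\Uad$ (closedness and convexity of $\Uad$). By exactly the Aubin--Lions and Lipschitz-nonlinearity argument used in the existence proof for \eqref{Pteta} above, the corresponding states satisfy $y_{u_n}\to y_{u^\ast}$ strongly in $L^2(Q)$, and passing to the limit in the state equation identifies the limit as $y_{u^\ast}$. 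In particular $\mathcal{J}(u^\ast)=\lim_n\mathcal{J}(u_n)=\mathcal{J}(\bar u)$, so $u^\ast$ is a global minimizer of \eqref{P}. Strictness of $\bar u$ forces $y_{u^\ast}=\bar y$, while the strong $L^2$-convergence together with $\|y_{u_n}-\bar y\|_{L^2(Q)}\ge\delta$ yields $\|y_{u^\ast}-\bar y\|_{L^2(Q)}\ge\delta>0$, a contradiction.

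The main obstacle here is the passage to the limit in the nonlinear term $f(y_{u_n})$; this relies on the strong $L^2(Q)$-convergence of the states, which in turn needs the uniform $W(0,T)$-bound from \cref{Prop3.1} and Aubin--Lions compactness. All other steps (boundedness of $\Uad$, closedness under weak $L^2$-limits, the combined bound $\min\{c,\alpha/M^2\}$) are routine.
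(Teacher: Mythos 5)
Your proposal is correct and follows essentially the same route as the paper: the local growth from \cref{Proploc} on the near region, a contradiction/compactness argument (weak $L^2$ convergence of controls, Aubin--Lions plus the Lipschitz nonlinearity for strong convergence of states, then strictness of $\bar u$) to get a uniform gap $\alpha$ on the far region, and the combination $\gamma=\min\{c,\alpha/M^2\}$ using the uniform bound $M$ on $\|y_u-\bar y\|_{L^2(Q)}$. No substantive differences to report.
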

	
	\begin{proof}
		By \cref{Proploc}, there exist numbers $\delta>0$ and $c>0$ such that
		\begin{align}
			\mathcal J(u) - \mathcal J(\bar u) \ge c\,{\| y_{u} - y_{\bar u}\|}_{L^2(Q)}^2
		\end{align}
		for all $u\in\Uad$ such that $\|y_{u}-y_{\bar u}\|_{L^2(Q)}\le\delta$.
		We now claim that there exists $\varepsilon>0$ such that $\mathcal J(u)-\mathcal J(\bar u)\ge \varepsilon$ for all $u\in\Uad$ such that $\|y_u-y_{\bar u}\|_{L^2(Q)}\ge \delta$. Otherwise, there would exist a sequence $\{u_n\}_{n\in\Nb}\subset\Uad$ satisfying $\|y_{u_n}-y_{\bar u}\|_{L^2(Q)}\ge \delta$ and $\mathcal J(u_n)\to \mathcal J(\bar u)$ as $n\to +\infty$. We can extract a subsequence $\{u_{n_k}\}_{k\in\mathbb N}$ of $\{u_n\}_{n\in\Nb}$ converging weakly in $L^2(Q)$ to some $\hat u\in\Uad$. We argue that $\{y_{u_{n_k}}\}_{k\in\mathbb N}$ converges weakly in $L^2(Q)$ to $y_{\hat u}$. {By Proposition \ref{Prop3.1} the associated states $\{y_{u_{n_k}}\}$ are uniformly bounded in $W(0,T)$, hence by Aubin–Lions theorem relatively compact in $L^2(Q)$. Thus, up to a subsequence, $y_{u_{n_k}}\to \widehat y$ in $L^2(Q)$ and weakly in $W(0,T)$. Since $f$ is globally Lipschitz by Assumption \ref{Assumption}, $f(y_{u_{n_k}})\to f(\widehat y)$ in $L^2(Q)$. Passing to the limit in the weak formulation shows that $\widehat y=y_{\hat u}$. Hence $y_{u_{n_k}}\to y_{\hat u}$ strongly in $L^2(Q)$.} Using {the} lower semicontinuity of the squared $L^2(Q)$-norm, 
		\begin{align*}
			\mathcal J(\hat u) \le \liminf_{k\to+\infty} \mathcal J(u_{n_k}) =\lim_{k\to+\infty} \mathcal J(u_{n_k}) = \mathcal J(\bar u).
		\end{align*}
		Since $\bar u$ is a unique minimizer, we deduce that $\hat u=\bar u$, and that consequently  $y_{u_n}\to y_{\bar u}$ in $L^2(Q)$, yielding a contradiction.  Set  $M\colonequals\sup_{u\in\Uad}\| y_{u} - \bar y \|_{L^2(Q)}$. Then, for all $u\in\Uad$ with $\|y_u-y_{\bar u}\|_{L^2(Q)}\ge\delta$,
		\begin{align*}
			\mathcal J(u)-\mathcal J(\bar u)\ge \frac{\varepsilon}{M^2}\,{\| y_{u} - y_{\bar u}\|}_{L^2(Q)}^2.
		\end{align*}
		Setting $\gamma\colonequals\min\{c,\nicefrac{\varepsilon}{M^2}\}$, we see that
		\begin{align*}
			\mathcal J(u)-\mathcal J(\bar u)\ge \gamma\,{\|y_u-y_{\bar u}\|}_{L^2(Q)}^2\quad\text{for all } u\in\Uad.
		\end{align*}
	\end{proof}
	
	\begin{corollary}
		\label{corlem}
		Suppose that $\bar u\in\Uad$ is a unique global minimizer  satisfying \cref{MA}. Then, there exists $\gamma>0$ such that, for any $\tau\in[0,T]$, 
		\begin{align}
			\mathcal J_{\tau, y_{\bar u}(\tau)}(u) - 	\mathcal J_{\tau, y_{\bar u}(\tau)}(\bar u) \ge \gamma\left\|y^{\tau,y_{\bar u}(\tau)}_{u}-y^{{\tau,y_{\bar u}(\tau)}}_{\bar u}\right\|_{L^2(Q_\tau)}^2\quad \text{for all } u\in\Utad.
		\end{align}
	\end{corollary}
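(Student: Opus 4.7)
The plan is to reduce \cref{corlem} to the global growth bound established in \cref{globalgrowth} by means of a concatenation (``gluing'') argument in time. Fix $\tau\in[0,T]$ and an arbitrary $u\in\Utad$. First, I would construct the lifted control $w\in\Uad$ by setting $w\equiv\bar u$ on $(0,\tau)\times\Omega$ and $w\equiv u$ on $Q_\tau$. Since the pointwise box constraints are the same, $w\in\Uad$ indeed holds. The key observation is that the associated state $y_w$ coincides with $y_{\bar u}$ on $[0,\tau]$ (by uniqueness of weak solutions, using \cref{Prop3.1}), and therefore on $[\tau,T]$ it solves \eqref{eq:taustate} with initial datum $\eta=y_{\bar u}(\tau)$ and control $u$; by uniqueness again, $y_w(t,\bx)=y^{\tau,y_{\bar u}(\tau)}_u(t,\bx)$ for a.e.\ $(t,\bx)\in Q_\tau$.

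Next, I would apply \cref{globalgrowth} to the reference problem to obtain, with the $\gamma>0$ from that lemma,
\begin{align*}
\mathcal J(w) - \mathcal J(\bar u) \;\ge\; \gamma\,{\|y_w-y_{\bar u}\|}_{L^2(Q)}^{2}.
\end{align*}
Now both integrals defining the objective functionals split over $(0,\tau)\times\Omega$ and $Q_\tau$. On $(0,\tau)\times\Omega$ the integrand $\tfrac12|y_w-y_Q|^2$ equals $\tfrac12|y_{\bar u}-y_Q|^2$, so the contributions on $[0,\tau)$ cancel in $\mathcal J(w)-\mathcal J(\bar u)$. Hence the left-hand side reduces exactly to $\mathcal J_{\tau,y_{\bar u}(\tau)}(u)-\mathcal J_{\tau,y_{\bar u}(\tau)}(\bar u)$. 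Similarly, since $y_w=y_{\bar u}$ on $[0,\tau]$, the $L^2(Q)$-norm on the right reduces to the $L^2(Q_\tau)$-norm of $y^{\tau,y_{\bar u}(\tau)}_u-y^{\tau,y_{\bar u}(\tau)}_{\bar u}$. Combining these identities yields the desired inequality with the very same constant $\gamma$, uniformly in $\tau\in[0,T]$.

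The main technical point to verify carefully is that the concatenated function $w$ gives rise to a globally well-defined weak solution on $(0,T)$ coinciding with the two pieces. This requires checking that the trace $y_w(\tau)=y_{\bar u}(\tau)\in L^2(\Omega)$ is the correct initial datum for the second piece, which is ensured by the continuous embedding $W(0,T)\hookrightarrow C([0,T];L^2(\Omega))$ recalled in the preliminaries, together with uniqueness of weak solutions under \cref{Assumption}. Once this gluing is justified, the rest is a direct bookkeeping of the splitting of integrals, and no additional smallness or weak-compactness argument is needed, since the constant $\gamma$ has already been upgraded from local to global in \cref{globalgrowth}.
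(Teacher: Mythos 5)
Your proposal is correct and follows essentially the same route as the paper: the paper's proof also concatenates $\bar u$ on $[0,\tau)$ with $u$ on $[\tau,T]$ to form $w\in\Uad$, identifies $y_w$ with $y_{\bar u}$ and $y_u^{\tau,y_{\bar u}(\tau)}$ on the respective subintervals, and substitutes $w$ into the global growth bound \eqref{ebne} from \cref{globalgrowth}. Your additional care in justifying the gluing via the embedding $W(0,T)\hookrightarrow C([0,T];L^2(\Omega))$ and uniqueness is a welcome elaboration of a step the paper leaves implicit.
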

	
	\begin{proof}
		Given $u\in\Utad$, define the control $w\in\Uad$ as  $w(t)\colonequals\bar u(t)$ if $t\in [0,\tau)$ and $w(t)\colonequals u(t)$ if $t \in [\tau,T]$. Then it holds $y_w=y_{\bar u}$ for $t\in [0,\tau]$ and $y_w=y_{u}^{\tau,y_{\bar u}(\tau)}$ for $t\in [\tau ,T]$. The result is obtained by substituting $u=w$ in \cref{ebne}. 
	\end{proof}
	
	We will recurrently use the following lemma. 
	
	\begin{lemma}
		\label{lemma:LsL1}
		Let $\tau\in[0,T)$. For given $\alpha\in L^\infty(Q_\tau)$ and $\rho\in L^2(Q_\tau)$ there exists a unique weak solution $z_{\alpha,\rho}\in W(\tau,T)$ to
		\begin{align}
			\label{testequ}
			\begin{cases}
				\displaystyle\frac{\partial z}{\partial t}- \dive\big(A(\bx)\nabla z\big) + \alpha z = \rho\text{ a.e. in }Q_\tau,\\
				z=0\text{ a.e. on } \Sigma_\tau,\quad  z(\tau) =0 \text{ a.e. in }\Omega.
			\end{cases}
		\end{align}
		Furthermore, for each $s\in[1,\nicefrac{(d+2)}{d})$ there exists $c>0$ such that, for any $\tau\in[0,T)$,
		\begin{align*}
			{\| z_{\alpha,\rho}\|}_{L^s(Q_\tau)}\le c {\|\rho\|}_{L^1(Q_\tau)}
		\end{align*}
		for all nonnegative $\alpha\in L^\infty(Q_\tau)$, and  all $\rho\in L^2(Q_\tau)$.
	\end{lemma}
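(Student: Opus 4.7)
The plan is to proceed in two steps. First, I would establish existence and uniqueness of $z_{\alpha,\rho}\in W(\tau,T)$ using the standard Lions–Galerkin energy framework: since $\rho\in L^2(Q_\tau)$ and the time-dependent bilinear form $a(t;z,\varphi)\colonequals\int_\Omega\langle A\nabla z,\nabla\varphi\rangle\,\dx+\int_\Omega\alpha(t)z\varphi\,\dx$ is uniformly bounded and coercive on $V$ (coercivity uses the uniform positive-definiteness of $A$, the Poincar\'e inequality, and the sign condition $\alpha\ge 0$), classical parabolic theory yields a unique weak solution in $W(\tau,T)$ together with a standard $L^2$-based energy estimate.

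Second, to pass from the natural $L^2$ bound to the $L^s$–$L^1$ estimate, I would use a duality argument. Fix $s\in[1,\nicefrac{(d+2)}{d})$ and let $s'$ denote its H\"older conjugate (with $s'=\infty$ when $s=1$); observe that $s'>\nicefrac{(d+2)}{2}$. For arbitrary $\psi\in L^{s'}(Q_\tau)$, introduce the backward adjoint problem
\begin{equation*}
\begin{cases}
\displaystyle -\frac{\partial\phi}{\partial t}-\dive\big(A(\bx)\nabla\phi\big)+\alpha\phi=\psi\ \text{ a.e. in }Q_\tau,\\
\phi=0\ \text{ a.e. on }\Sigma_\tau,\quad \phi(T)=0\ \text{ a.e. in }\Omega.
\end{cases}
\end{equation*}
The change of variables $t\mapsto \tau+T-t$ turns this into a forward problem of the same type, so the first step provides a unique solution $\phi\in W(\tau,T)$. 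Testing the equation for $z_{\alpha,\rho}$ with $\phi$ and integrating by parts (the boundary terms vanish because $z_{\alpha,\rho}(\tau)=0$ and $\phi(T)=0$) yields
\begin{equation*}
\int_{Q_\tau} z_{\alpha,\rho}\,\psi\,\dx\dt=\int_{Q_\tau}\rho\,\phi\,\dx\dt\le {\|\rho\|}_{L^1(Q_\tau)}{\|\phi\|}_{L^\infty(Q_\tau)}.
\end{equation*}

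The key ingredient is then an $L^\infty$ bound on $\phi$ in terms of $\|\psi\|_{L^{s'}(Q_\tau)}$ with a constant that is independent of $\alpha\ge 0$ and of $\tau$. This is precisely the classical Aronson–Serrin / Ladyzhenskaya–Solonnikov–Ural'tseva regularity result for linear parabolic equations with right-hand side in $L^r$ for $r>\nicefrac{(d+2)}{2}$; the sign assumption $\alpha\ge 0$ ensures one can discard the zeroth-order term in the Moser/De Giorgi iteration without deteriorating the constant, and uniformity in $\tau$ follows by the trivial extension of $\phi$ and $\psi$ by zero to the larger cylinder $Q$. Combining this bound with the above identity gives
\begin{equation*}
\left|\int_{Q_\tau} z_{\alpha,\rho}\,\psi\,\dx\dt\right|\le c\,{\|\rho\|}_{L^1(Q_\tau)}{\|\psi\|}_{L^{s'}(Q_\tau)},
\end{equation*}
and taking the supremum over $\psi$ in the unit ball of $L^{s'}(Q_\tau)$, together with the duality $(L^s(Q_\tau))^*=L^{s'}(Q_\tau)$, yields the claimed estimate.

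The main obstacle is justifying the uniform $L^\infty$ bound for the adjoint solution independent of $\alpha$ and $\tau$; once one invokes (or sketches) the Aronson–Serrin estimate, the rest of the proof is a clean duality computation. Note that the borderline exponent $\nicefrac{(d+2)}{d}$ in the conclusion is precisely dual to the borderline $\nicefrac{(d+2)}{2}$ above which $L^\infty$ regularity holds, which is a reassuring consistency check for the argument.
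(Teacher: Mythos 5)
Your argument is correct and follows essentially the same route as the paper: the paper's proof defers the $L^s$--$L^1$ bound to the argument of \cite[Lemma 2]{DJV_2023}, which is precisely the duality computation you carry out (backward adjoint problem with data in $L^{s'}$, $s'>\nicefrac{(d+2)}{2}$, uniform $L^\infty$ bound using $\alpha\ge 0$, then pairing), and it handles $\tau$-independence by the same extension-to-$Q$ device. The only cosmetic difference is that the paper first records $z_{\alpha,\rho}\in L^{\nicefrac{2(d+2)}{d}}(Q_\tau)$ via the embedding $L^2(\tau,T;V)\cap L^\infty(\tau,T;H)\hookrightarrow L^{\nicefrac{2(d+2)}{d}}(Q_\tau)$ before invoking the duality step, which your write-up absorbs into the final duality identification.
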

	\begin{proof}
		Existence and uniqueness  of weak solutions  for (\ref{testequ}) 
		follow from standard results; see, e.g., \cite[Lemma 5.3 on p. 373]{Tro2010}.  By  \cite[Lemma 2.2]{CW_2023}, the space $L^2(\tau,T;V)\cap L^\infty(\tau,T; H)$ is continuously embedded in $L^{\nicefrac{2(d+2)}{d}}(Q_\tau)$. Consequently, for any $\rho\in L^2(Q_\tau)$ and $\alpha\in L^\infty(Q_\tau)$, $z_{\alpha,\rho}$ belongs to $L^{\nicefrac{2(d+2)}{d}}(Q_\tau)$, and in particular to $L^s(Q_\tau)$ for any $s\in[1,\nicefrac{(d+2)}{d})$.  From this, the estimate follows straightforwardly, see, e.g., the argument in the proof of  \cite[Lemma 2]{DJV_2023}. The independence of the constant $c > 0$ from $\tau$ can be easily verified by applying the result for the case $\tau = 0$ to the other cases.
	\end{proof}
	
	Recall that, under the notation convention we are using, there holds
	\begin{align*}
		 \text{$y_{\bar u} = y_{\bar u}^{\tau, y_{\bar u}(\tau)}$\,\, in $Q_{\tau}$\,\, for any $\tau\in[0,T)$. }
	\end{align*}
	Recall that $y_{\bar u}$ denotes the reference state. Next, we present a couple of technical lemmas that involve non-standard estimates related to stability.
	\begin{lemma}
		\label{eqesls}
		Let $\tau\in(0,T)$. Define $z_{u,h,\eta}\in W(\tau,T)$ as
		\begin{align*}
			z_{u,h,\eta}\colonequals y_{u}^{\tau+h,y_{\bar u}(\tau+h)}  - y_{u}^{\tau+h,y_{\bar u}(\tau)+\eta} + y_{\bar u}^{\tau+h,y_{\bar u}(\tau)+\eta} - y_{\bar u}^{\tau+h,y_{\bar u}(\tau+h)}
		\end{align*}
		for any $\eta\in H$, $h\in(0,T-\tau)$ and  $u\in\Uthad$. Then, for each $s\in[1,\nicefrac{d+2}{d})$ there exists a constant $c>0$ (independent of $u,h$ and $\eta$)  such that 
		\begin{align*}
			&{\|z_{u,h,\eta}\|}_{L^s(Q_{\tau+h})} \le c\Big({\|y_{\bar u}(\tau+h)-y_{\bar u}(\tau)\|}_{L^2(Q)}^2 +{\|\eta\|}^2_H\Big)\\
			&\hspace{1.4cm}+c\,\Big({\|y_{\bar u}(\tau+h)-y_{\bar u}(\tau)\|}_H+{\|\eta\|}_H \Big){\|y^{\tau+h,y_{\bar u}(\tau+h)}_{u}-y_{\bar u}^{\tau+h,y_{\bar u}(\tau+h)}\|}_{L^2(Q_{\tau+h})}.
		\end{align*}
	\end{lemma}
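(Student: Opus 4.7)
The plan is to derive a linear parabolic equation with zero initial data for $z := z_{u,h,\eta}$, apply \cref{lemma:LsL1} with a judicious mean-value decomposition that makes the reaction coefficient nonnegative, and then close the estimate through an auxiliary $L^2$-energy bound on $z$ itself.

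Set $y_1 := y_u^{\tau+h,y_{\bar u}(\tau+h)}$, $y_2 := y_u^{\tau+h,y_{\bar u}(\tau)+\eta}$, $y_3 := y_{\bar u}^{\tau+h,y_{\bar u}(\tau)+\eta}$, and $y_4 := y_{\bar u}^{\tau+h,y_{\bar u}(\tau+h)} = y_{\bar u}|_{Q_{\tau+h}}$ (by uniqueness of weak solutions to \eqref{eq:taustate}). Combining the four state equations with the signs prescribed by $z = y_1 - y_2 + y_3 - y_4$ cancels the control terms and the four initial values telescope to zero, yielding
\begin{equation*}
\frac{\partial z}{\partial t} - \dive(A\nabla z) + f(y_1) - f(y_2) + f(y_3) - f(y_4) = 0,\quad z(\tau+h) = 0.
\end{equation*}
Introduce the integral mean-value coefficients $\alpha := \int_0^1 f'\big((1-s)y_2 + sy_1\big)\,\mathrm ds$ and $\beta := \int_0^1 f'\big((1-s)y_3 + sy_4\big)\,\mathrm ds$, both nonnegative and bounded by $\|f'\|_\infty$ thanks to \cref{Assumption}. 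Writing $\Delta_1 := y_1 - y_4$, $\Delta_2 := y_2 - y_3$, $\Delta_4 := y_4 - y_3$, a short computation using $z = \Delta_1 - \Delta_2$ rewrites the nonlinear term as $\alpha z + (\alpha-\beta)\Delta_4$, so that the equation becomes
\begin{equation*}
\frac{\partial z}{\partial t} - \dive(A\nabla z) + \alpha z = (\beta - \alpha)\Delta_4, \quad z(\tau+h) = 0,
\end{equation*}
which fits the hypotheses of \cref{lemma:LsL1} and yields $\|z\|_{L^s(Q_{\tau+h})} \le c\,\|(\beta-\alpha)\Delta_4\|_{L^1(Q_{\tau+h})}$.

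To estimate the $L^1$-norm of the source, I would use the Lipschitz bound $|\alpha-\beta| \le \tfrac12\|f''\|_\infty(|\Delta_1| + |\Delta_2|)$ followed by Cauchy-Schwarz, giving $c\,(\|\Delta_1\|_{L^2} + \|\Delta_2\|_{L^2})\|\Delta_4\|_{L^2}$. Since $\Delta_4$ solves a homogeneous linearized parabolic problem with nonnegative reaction coefficient and initial datum $y_{\bar u}(\tau+h) - y_{\bar u}(\tau) - \eta$, the standard $L^\infty(H)$ energy estimate gives $\|\Delta_4\|_{L^2(Q_{\tau+h})} \le c\big(\|y_{\bar u}(\tau+h) - y_{\bar u}(\tau)\|_H + \|\eta\|_H\big)$. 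The factor $\|\Delta_1\|_{L^2}$ is precisely the term already present in the target bound and is left untouched. For $\|\Delta_2\|_{L^2}$ the key is the identity $\Delta_2 = \Delta_1 - z$, which gives $\|\Delta_2\|_{L^2} \le \|\Delta_1\|_{L^2} + \|z\|_{L^2}$. A standard $L^2$-energy estimate applied to the equation of $z$ (testing by $z$, using $\alpha \ge 0$, and Young's inequality) yields $\|z\|_{L^2(Q_{\tau+h})} \le c\,\|(\beta-\alpha)\Delta_4\|_{L^2(Q_{\tau+h})} \le c\|\Delta_4\|_{L^2(Q_{\tau+h})}$.

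Assembling these pieces gives $\|z\|_{L^s(Q_{\tau+h})} \le c\,\|\Delta_4\|_{L^2}\big(\|\Delta_1\|_{L^2} + \|\Delta_4\|_{L^2}\big)$, and substituting the bound on $\|\Delta_4\|_{L^2}$ together with $(a+b)^2 \le 2(a^2 + b^2)$ produces the claimed inequality with a constant independent of $u$, $h$ and $\eta$. The main obstacle I expect is the asymmetry between $\Delta_1$ and $\Delta_2$: only $\Delta_1$ appears on the right-hand side of the target bound, so the $\|\Delta_2\|_{L^2}$ contribution to the $L^1$-source estimate has to be decomposed cleanly into an admissible part and a remainder absorbable in the purely quadratic initial-data term; this is exactly what the identity $\Delta_2 = \Delta_1 - z$ combined with the auxiliary $L^2$-control on $z$ achieves.
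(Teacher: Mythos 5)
Your proposal is correct and follows essentially the same route as the paper: the same telescoping PDE for $z_{u,h,\eta}$ with a nonnegative mean-value coefficient, the same application of \cref{lemma:LsL1}, and the same use of the boundedness of $f''$ together with Cauchy--Schwarz and a priori bounds on the initial-data differences. The only (harmless) deviation is in bounding the coefficient difference: the paper estimates $\|y_{\theta_{\bar u,h,\eta}}-y_{\theta_{u,h,\eta}}\|_{L^2}$ directly via a priori estimates for $y_2-y_1$ and $y_3-y_4$, whereas you route the $\Delta_2$ contribution through the identity $\Delta_2=\Delta_1-z$ and an auxiliary $L^2$-energy bound on $z$ --- both yield the same final inequality.
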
 	
	
	\begin{proof}
		Let $h\in(0,T-\tau)$, $\eta\in H$ and $u\in\Uthad$. Applying the Mean Value Theorem, we can find functions  $\theta_{u,h,\eta},\theta_{\bar u,h,\eta}:Q_{\tau+h}\to[0,1]$ such that
		\begin{align*}
			f\big(y_{u}^{\tau+h,y_{\bar u}(\tau+h)}\big)- f\big(y_{u}^{\tau+h,y_{\bar u}(\tau)+\eta}\big) &=f'\big({y_{\theta_{u,h,\eta}}}\big)\big(y_{u}^{\tau+h,y_{\bar u}(\tau+h)}-y_{u}^{\tau+h,y_{\bar u}(\tau)+\eta}\big),\\
			f\big(y_{\bar u}^{\tau+h,y_{\bar u}(\tau+h)}\big)- f\big(y_{\bar u}^{\tau+h,y_{\bar u}(\tau)+\eta}\big) &=f'\big({y_{\theta_{\bar u,h,\eta}}}\big)\big(y_{\bar u}^{\tau+h,y_{\bar u}(\tau+h)}-y_{\bar u}^{\tau+h,y_{\bar u}(\tau)+\eta}\big)
		\end{align*}
		on $Q_{\tau+h}$ with 
		\begin{align*}
			y_{\theta_{u,h,\eta}}&\colonequals y_{u}^{\tau+h,y_{\bar u}(\tau+h)}+\theta_{u,h,\eta}\big(y_{u}^{\tau+h,y_{\bar u}(\tau)+\eta}-y_{u}^{\tau+h,y_{\bar u}(\tau+h)}\big),\\
			y_{ \theta_{\bar u,h,\eta}}&\colonequals y_{\bar u}^{\tau+h,y_{\bar u}(\tau+h)}+\theta_{\bar u,h,\eta}\big(y_{\bar u}^{\tau+h,y_{\bar u}(\tau)+\eta}-y_{\bar u}^{\tau+h,y_{\bar u}(\tau+h)}\big).
		\end{align*}
		Observe that 
		\begin{align*}
			&f\big(y_{u}^{\tau+h,y_{\bar u}(\tau+h)}\big)  - f\big(y_{u}^{\tau+h,y_{\bar u}(\tau)+\eta}\big)+ f\big(y_{\bar u}^{\tau+h,y_{\bar u}(\tau)+\eta}\big) - f\big(y_{\bar u}^{\tau+h,y_{\bar u}(\tau+h)}\big)\\
			&=f'\big({y_{\theta_{u,h,\eta}}}\big)\big(y_{u}^{\tau+h,y_{\bar u}(\tau+h)}-y_{u}^{\tau+h,y_{\bar u}(\tau)+\eta}\big)\\
			&\quad- f'\big({y_{\theta_{\bar u,h,\eta}}}\big)\big(y_{\bar u}^{\tau+h,y_{\bar u}(\tau+h)}-y_{\bar u}^{\tau+h,y_{\bar u}(\tau)+\eta}\big)\\
			&=f'\big({y_{\theta_{u,h,\eta}}}\big)z_{u,h,\eta} +\left( f'\big({y_{\theta_{u,h,\eta}}}) -  f'\big({y_{\theta_{\bar u,h,\eta}}}\big)\right)\big(y_{\bar u}^{\tau+h,y_{\bar u}(\tau+h)}-y_{\bar u}^{\tau+h,y_{\bar u}(\tau)+\eta}\big).
		\end{align*}
		Defining
		\begin{align*}
			\rho_{u,h,\eta}\colonequals\left( f'\big({y_{\theta_{\bar u,h,\eta}}}\big) -  f'\big({y_{\theta_{ u,h,\eta}}}\big)\right)\big(y_{\bar u}^{\tau+h,y_{\bar u}(\tau+h)}-y_{\bar u}^{\tau+h,y_{\bar u}(\tau)+\eta}\big)
		\end{align*}
		we see that
		\begin{align}\label{eq:sumofpde1}
			\begin{cases}
				\displaystyle\frac{\partial z_{u,h,\eta}}{\partial t}- \dive\big(A(\bx)\nabla z_{u,h,\eta}\big) + f'(y_{\theta_{u,h,\eta}} ) z_{u,h,\eta}= \rho_{u,h,\eta}\text{ a.e. in } Q_{\tau+h}, \\
				z_{u,h,\eta}=0\text{ a.e. on } \Sigma_{\tau+h},\quad  z_{u,h,\eta}(\tau+h)=0\text{ a.e. in } \Omega. 
			\end{cases}
		\end{align}
		Since by \cref{Assumption}-$(ii)$  $f':\mathbb R\to\mathbb R$ is bounded, we see that $\alpha:=f'(y_{\theta_{u,h,\eta}})$ belongs to $L^\infty(Q_{\tau+h})$ and $\rho$ to $L^2(Q_{\tau+h})$. 
        {From the boundedness of $f''$, which holds according to \cref{Assumption}, we infer the existence of a constant $c>0$ independent of the specific $\hat y$, such that $\|f''(\hat y)\|_{L^\infty(Q_{\tau+h})}\leq c$, for all $\hat y$ obtained from Taylor's theorem, such that $f'\big({y_{\theta_{\bar u,h,\eta}}}\big) -  f'\big({y_{\theta_{ u,h,\eta}}}\big)=f''(\hat y)\left({y_{\theta_{\bar u,h,\eta}}}-{y_{\theta_{ u,h,\eta}}}\right)$. We estimate ${\|\rho_{u,h,\eta}\|}_{L^1(Q_{\tau+h})}\leq c \|{y_{\theta_{\bar u,h,\eta}}}-{y_{\theta_{ u,h,\eta}}} \|_{L^2(Q_{\tau+h})}\|y_{\bar u}^{\tau+h,y_{\bar u}(\tau+h)}-y_{\bar u}^{\tau+h,y_{\bar u}(\tau)+\eta} \|_{L^2(Q_{\tau+h})}$. Using the definitions of the intermediate states, we write the corresponding differences of states, by means of the mean-value theorem, as solutions of linear parabolic equations with bounded coefficients. Applying the associated a priori estimates, we obtain that the first term on the right-hand side can be estimated by
        \begin{align*}
           & \|{y_{\theta_{\bar u,h,\eta}}}-{y_{\theta_{ u,h,\eta}}} \|_{L^2(Q_{\tau+h})}\\
           &\leq c \|y_{u}^{\tau+h,y_{\bar u}(\tau+h)}-y_{\bar u}^{\tau+h,y_{\bar u}(\tau+h)}\|_{L^2(Q_{\tau+h})}+2c \|y_{\bar u}(\tau)+\eta-y_{\bar u}(\tau+h) \|_{L^2(Q_{\tau+h})}.
        \end{align*}
        Again, utilizing a priori estimates for linear equations, the second term on the right-hand side estimates by
        \begin{align*}
            \|y_{\bar u}^{\tau+h,y_{\bar u}(\tau+h)}-y_{\bar u}^{\tau+h,y_{\bar u}(\tau)+\eta} \|_{L^2(Q_{\tau+h})}\leq c (\|y_{\bar u}(\tau)-y_{\bar u}(\tau+h) \|_{L^2(Q_{\tau+h})}+\| \eta \|_H).
        \end{align*}
		Now, combining these estimates and applying the Young inequality, it is straightforward to obtain} 
		\begin{align*}
			&{\|\rho_{u,h,\eta}\|}_{L^1(Q_{\tau+h})}\le  c\Big({\|y_{\bar u}(\tau+h)-y_{\bar u}(\tau)\|}_{L^2(Q)}^2 +{\|\eta\|}^2_H\Big)\\
			&\hspace{1.3cm}+c\,\Big({\|y_{\bar u}(\tau+h)-y_{\bar u}(\tau)\|}_H+{\|\eta\|}_H \Big){\|y^{\tau+h,y_{\bar u}(\tau+h)}_{u}-y_{\bar u}^{\tau+h,y_{\bar u}(\tau+h)}\|}_{L^2(Q_{\tau+h})}.
		\end{align*}
		for a constant $c>0$ which is independent of $u,h$ and $\eta$. The result then follows directly from Lemma~\ref{lemma:LsL1}. 
		\end{proof}

	\begin{lemma}\label{esobfu}
		Let $\tau\in(0,T)$. Define $A_{u,h,\eta}\in \mathbb R$ as
		\begin{align*}
			A_{u,h,\eta}\colonequals\mathcal J_{\tau+h,y_{\bar u}(\tau+h)}(u)-\mathcal J_{\tau+h,y_{\bar u}(\tau)+\eta}(u)+\mathcal J_{\tau+h,y_{\bar u}(\tau)+\eta}(\bar u)-\mathcal J_{\tau+h,y_{\bar u}(\tau+h)}(\bar u).
		\end{align*}
		There exists a constant $c>0$ such that 
		\begin{align*}
			|A_{u,h,\eta}|&\le c\, \big({\|y_{\bar u}(\tau+h)-y_{\bar u}(\tau)\|}_H+{\|\eta\|}_H\big){\|y^{\tau+h,y_{\bar u}(\tau+h)}_{u}-y_{\bar u}^{\tau+h,y_{\bar u}(\tau+h)}\|}_{L^2(Q_{\tau+h})}\\
			&\quad+ c\,\big({\|y_{\bar u}(\tau+h)-y_{\bar u}(\tau)\|}_H^2 +{\|\eta\|}^2_H\big)
		\end{align*}
		for all $\eta\in H$ small enough, $h\in(0,T-\tau]$ and $u\in\Uthad$.
	\end{lemma}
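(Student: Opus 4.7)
The plan is to expand the quadratic cost difference algebraically so that $z_{u,h,\eta}$ from \cref{eqesls} arises as an explicit factor, and then control the remaining pieces with routine parabolic a priori estimates and a H\"older duality pairing. I set $\sigma := \tau + h$ and abbreviate $y_1 := y_u^{\sigma, y_{\bar u}(\sigma)}$, $y_2 := y_u^{\sigma, y_{\bar u}(\tau)+\eta}$, $y_3 := y_{\bar u}^{\sigma, y_{\bar u}(\tau)+\eta}$, $y_4 := y_{\bar u}^{\sigma, y_{\bar u}(\sigma)}$. Grouping the four summands of $A_{u,h,\eta}$ into pairs sharing the same control and applying $a^2-b^2=(a-b)(a+b)$ twice produces
\begin{align*}
2\, A_{u,h,\eta} = \int_{Q_\sigma}\bigl[(y_1-y_4)(y_1+y_4-2y_Q) - (y_2-y_3)(y_2+y_3-2y_Q)\bigr]\,\dx\dt.
\end{align*}
Adding and subtracting $(y_1-y_4)(y_2+y_3-2y_Q)$ and using that $(y_1-y_4) - (y_2-y_3) = y_1-y_2+y_3-y_4 = z_{u,h,\eta}$ by the definition in \cref{eqesls}, this rearranges as
\begin{align*}
2\, A_{u,h,\eta} = \int_{Q_\sigma}(y_1-y_4)\bigl[(y_1-y_2)+(y_4-y_3)\bigr]\,\dx\dt + \int_{Q_\sigma}z_{u,h,\eta}(y_2+y_3-2y_Q)\,\dx\dt.
\end{align*}

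For the first integral, the differences $y_1-y_2$ and $y_4-y_3$ each solve a homogeneous linear parabolic equation whose coefficient is $f'$ evaluated at a convex combination (admissible by \cref{Assumption}-(ii)) and whose initial datum is $y_{\bar u}(\sigma) - y_{\bar u}(\tau) - \eta$. Standard a priori estimates together with the embedding $W(\sigma,T)\hookrightarrow L^2(Q_\sigma)$ give
\begin{align*}
\|y_1-y_2\|_{L^2(Q_\sigma)} + \|y_4-y_3\|_{L^2(Q_\sigma)} \le c\bigl(\|y_{\bar u}(\sigma)-y_{\bar u}(\tau)\|_H + \|\eta\|_H\bigr).
\end{align*}
Since $\|y_1-y_4\|_{L^2(Q_\sigma)}$ is exactly the stability norm appearing on the right-hand side of the claim, Cauchy--Schwarz on the first integral already reproduces the first term of the target bound.

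For the second integral, I will pick $s\in[1,\nicefrac{d+2}{d})$ close enough to the right endpoint that its H\"older conjugate $s'$ satisfies $s'\le\min\{\nicefrac{2(d+2)}{d},\, r\}$; the strict inequality $r>1+\nicefrac{d}{2}$ in \cref{Assumption}-(iii) guarantees that this window is non-empty. \cref{Prop3.2} applied to $y_2,y_3$, together with $L^r(Q_\sigma)\hookrightarrow L^{s'}(Q_\sigma)$ for $y_Q$, then bounds $\|y_2+y_3-2y_Q\|_{L^{s'}(Q_\sigma)}$ uniformly in $u\in\Uthad$ and $\eta$ in a fixed bounded set. H\"older's inequality combined with \cref{eqesls} then yields a bound on the second integral by
\begin{align*}
c\bigl(\|y_{\bar u}(\sigma)-y_{\bar u}(\tau)\|_H^2 + \|\eta\|_H^2\bigr) + c\bigl(\|y_{\bar u}(\sigma)-y_{\bar u}(\tau)\|_H + \|\eta\|_H\bigr)\|y_1-y_4\|_{L^2(Q_\sigma)},
\end{align*}
which supplies the remaining summands. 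The one truly delicate point is the admissible choice of the H\"older exponent within the interval $(\nicefrac{d+2}{2},\,\min\{\nicefrac{2(d+2)}{d},\, r\}]$: the hypothesis $r>1+\nicefrac{d}{2}$ on the tracking data is present exactly to keep this window open, and hence to pair the merely $L^s$-controlled quantity $z_{u,h,\eta}$ against a uniformly $L^{s'}$-bounded companion. Everything else reduces to the algebraic identity isolating $z_{u,h,\eta}$ and standard linear-parabolic estimates.
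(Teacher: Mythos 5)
Your proof is correct and takes essentially the same route as the paper's: both expand the quadratic cost differences so that $z_{u,h,\eta}$ from Lemma~\ref{eqesls} appears paired via H\"older against a uniformly $L^{s'}$-bounded companion (with the exponent window opened exactly by $r>1+\nicefrac{d}{2}$), and both handle the remaining cross term by Cauchy--Schwarz together with linear parabolic a priori estimates for initial-data differences; your regrouping, which yields the cross term $(y_1-y_4)\bigl[(y_1-y_2)+(y_4-y_3)\bigr]$ rather than the paper's $(y_4-y_3)\,w_{u,h,\eta}$, is an equivalent cosmetic variant. The only slip is in the prose: you announce grouping the summands ``into pairs sharing the same control,'' but your displayed identity pairs states sharing the same \emph{initial condition} --- the algebra itself is correct.
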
 	
	
	\begin{proof}
		Let $h\in(0,T-\tau)$, $\eta\in H$ with $\|\eta\|_H\le 1$ and $u\in\Uthad$. By \cref{Assumption}-(iii), there exists $r'>1+\nicefrac{d}{2}$ such that $y_Q\in L^{r'}(Q)$. We set $r\colonequals\min\{r',\nicefrac{2(d+2)}{d}\}$. Let $s>0$ so that $\nicefrac{1}{r}+\nicefrac{1}{s}=1$ and observe that $s<\nicefrac{d+2}{d}$. By $z_{u,h,\eta}\in L^s(Q)$ we denote the function in Lemma~\ref{eqesls}. Observe that
		\begin{align*}
			&A_{u,h,\eta}\\
			&~=\int_Q\big(y_{u}^{\tau+h,y_{\bar u}(\tau+h)}+y_{u}^{\tau+h,y_{\bar u}(\tau)+\eta}-2y_{Q}\big)\big(y_{u}^{\tau+h,y_{\bar u}(\tau+h)}-y_{u}^{\tau+h,y_{\bar u}(\tau)+\eta}\big) \,\dx\dt\\
			&~\quad-\int_Q\big(y_{\bar u}^{\tau+h,y_{\bar u}(\tau+h)}+y_{\bar u}^{\tau+h,y_{\bar u}(\tau)+\eta}-2y_{Q}\big)\big(y_{\bar u}^{\tau+h,y_{\bar u}(\tau+h)}-y_{\bar u}^{\tau+h,y_{\bar u}(\tau)+\eta}\big)\,\dx \dt\\
			&~=\int_Q\big(y_{u}^{\tau+h,y_{\bar u}(\tau+h)}+y_{u}^{\tau+h,y_{\bar u}(\tau)+\eta}-2y_{Q}\big)z_{u,h,\eta}\,\dx \dt\\
			&~\quad+\int_Q\big(y_{\bar u}^{\tau+h,y_{\bar u}(\tau+h)}-y_{\bar u}^{\tau+h,y_{\bar u}(\tau)+\eta}\big)w_{u,h,\eta}\,\dx\dt\\
			&~\equalscolon B_{u,h,\eta} + C_{u,h,\eta},
		\end{align*}
		where
		\begin{align*}
			w_{u,h,\eta}\colonequals y_{u}^{\tau+h,y_{\bar u}(\tau+h)}+y_{u}^{\tau+h,y_{\bar u}(\tau)+\eta}-y_{\bar u}^{\tau+h,y_{\bar u}(\tau+h)}-y_{\bar u}^{\tau+h,y_{\bar u}(\tau)+\eta}\in W(\tau_h,T).
		\end{align*}
		Using the continuous embeddings $W(\tau+h,T)\hookrightarrow L^{2(d+2)/d}(Q_{\tau+h})\hookrightarrow L^r(Q_{\tau+h})$, we can find a constant $M>0$ independent of $u,h$ and $\eta$ such that 
		\begin{align*}
			{\|y_{u}^{\tau+h,y_{\bar u}(\tau+h)}+y_{u}^{\tau+h,y_{\bar u}(\tau)+\eta}-2y_{Q}\|}_{L^{r}(Q_{\tau+h})} \le M.
		\end{align*}
		Employing  Lemma~\ref{eqesls}, we can conclude that there exists a constant $c_1>0$  independent of $u,h,\eta$ such that 
		\begin{align*}
			B_{u,h,\eta}&\le c_1 \Big({\|y_{\bar u}(\tau+h)-y_{\bar u}(\tau)\|}_H+{\|\eta\|}_H\Big){\|y^{\tau+h,y_{\bar u}(\tau+h)}_{u}-y_{\bar u}^{\tau+h,y_{\bar u}(\tau+h)}\|}_{L^2(Q_{\tau+h})}\\
			&\quad+ c_1\Big({\|y_{\bar u}(\tau+h)-y_{\bar u}(\tau)\|}_H^2 +{\|\eta\|}^2_H\Big).
		\end{align*}
		In a similar fashion, we can derive an analogous estimate for the term $C_{u,h,\eta}$. 
	\end{proof}

	We continue with a result that is a key component for the proof of our main theorem.
	
	\begin{theorem}
		\label{stability}
		Suppose that $\bar u\in\Uad$ is a unique global minimizer satisfying \cref{MA}. For each $\tau\in[0,T]$ there exists a constant $\kappa>0$ such that
		\begin{align*}
			{\| y^{\tau+h,y_{\bar u}(\tau)+\eta}_{u_{h,\eta}}- y^{\tau+h,y_{\bar u}(\tau+h)}_{\bar u}\|}_{L^2(Q_{\tau+h})}\le \kappa \Big({\|y_{\bar u}(\tau+h)-y_{\bar u}(\tau)\|}_H+{\|\eta\|}_H\Big)
		\end{align*}
		for all $h\in[0,T-\tau]$, $\eta\in H$ small enough, and any minimizer {$u_{h,\eta}\in\Uthad$} of problem  $(\mathbf P_{\tau+h,y_{\bar u}(\tau)+\eta})$.
	\end{theorem}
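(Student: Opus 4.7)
The plan is to compare the perturbed minimizer against the restriction of $\bar u$ by exploiting Bellman's principle together with the global quadratic growth of \cref{corlem} and the telescoping estimate of \cref{esobfu}. To this end, fix $\tau\in[0,T]$, $h\in[0,T-\tau]$, $\eta\in H$ small, and let $u_{h,\eta}\in\Uthad$ be any minimizer of $(\mathbf P_{\tau+h,y_{\bar u}(\tau)+\eta})$. Abbreviate $D\colonequals {\|y_{\bar u}(\tau+h)-y_{\bar u}(\tau)\|}_H+{\|\eta\|}_H$ and
\[
E\colonequals{\|y^{\tau+h,y_{\bar u}(\tau+h)}_{u_{h,\eta}}-y_{\bar u}^{\tau+h,y_{\bar u}(\tau+h)}\|}_{L^2(Q_{\tau+h})}.
\]
By Bellman's optimality principle, $\bar u|_{[\tau+h,T]}$ is a global minimizer of $(\mathbf P_{\tau+h,y_{\bar u}(\tau+h)})$; since $\bar u$ is a strict global minimizer of \eqref{P} satisfying \cref{MA}, \cref{corlem} (applied at initial time $\tau+h$) yields a constant $\gamma>0$ independent of $h$ and $\eta$ such that
\[
\mathcal J_{\tau+h,y_{\bar u}(\tau+h)}(u_{h,\eta})-\mathcal J_{\tau+h,y_{\bar u}(\tau+h)}(\bar u)\ \ge\ \gamma\, E^{2}.
\]

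Next, I would introduce the quantity $A_{u_{h,\eta},h,\eta}$ of \cref{esobfu}, which is designed precisely so that
\[
\mathcal J_{\tau+h,y_{\bar u}(\tau+h)}(u_{h,\eta})-\mathcal J_{\tau+h,y_{\bar u}(\tau+h)}(\bar u)
=\big[\mathcal J_{\tau+h,y_{\bar u}(\tau)+\eta}(u_{h,\eta})-\mathcal J_{\tau+h,y_{\bar u}(\tau)+\eta}(\bar u)\big]+A_{u_{h,\eta},h,\eta}.
\]
The bracketed term is nonpositive by optimality of $u_{h,\eta}$ for $(\mathbf P_{\tau+h,y_{\bar u}(\tau)+\eta})$, so combining with the growth bound above gives $\gamma E^{2}\le A_{u_{h,\eta},h,\eta}$. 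Invoking \cref{esobfu} for the right-hand side, one obtains
\[
\gamma\, E^{2}\ \le\ c\,D\,E + c\,D^{2}
\]
for some constant $c>0$ independent of $h,\eta$ and $u_{h,\eta}$.

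A standard application of Young's inequality (absorbing the $c\,D\,E$ term into $\tfrac{\gamma}{2}E^{2}$) then produces a constant $\kappa_1>0$ such that $E\le\kappa_1 D$. Finally, the triangle inequality decomposes the target quantity as
\[
{\|y^{\tau+h,y_{\bar u}(\tau)+\eta}_{u_{h,\eta}}-y^{\tau+h,y_{\bar u}(\tau+h)}_{\bar u}\|}_{L^2(Q_{\tau+h})}\le {\|y^{\tau+h,y_{\bar u}(\tau)+\eta}_{u_{h,\eta}}-y^{\tau+h,y_{\bar u}(\tau+h)}_{u_{h,\eta}}\|}_{L^2(Q_{\tau+h})}+E,
\]
and the first summand is controlled by $c'\,{\|y_{\bar u}(\tau)+\eta-y_{\bar u}(\tau+h)\|}_H\le c'D$ by the standard a priori estimate of \cref{Prop3.1} applied to the linear equation satisfied by the difference of two states with the same control and different initial data (using the boundedness of $f'$ and the Mean Value Theorem as in the proof of \cref{eqesls}). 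Setting $\kappa\colonequals\kappa_1+c'$ concludes the argument.

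The main obstacle is not any single step but the careful selection of the three reference objects appearing in the argument: the growth inequality must be applied at the reference point $y_{\bar u}(\tau+h)$ (where Bellman gives us a minimizer), the optimality inequality at the reference point $y_{\bar u}(\tau)+\eta$ (where $u_{h,\eta}$ is a minimizer), and the difference of the two must be absorbed by the telescoping lemma \cref{esobfu}, whose delicate mixed bilinear-plus-quadratic right-hand side is exactly what makes the Young-type absorption on $E$ work. This mismatch between the two initial conditions (the one for which quadratic growth is available and the one for which optimality holds) is the true technical heart of the proof.
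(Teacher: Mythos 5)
Your proposal is correct and follows essentially the same route as the paper's proof: quadratic growth from \cref{corlem} at the reference initial datum $y_{\bar u}(\tau+h)$, the optimality inequality for $u_{h,\eta}$ at $y_{\bar u}(\tau)+\eta$, the bound on $A_{u_{h,\eta},h,\eta}$ from \cref{esobfu}, absorption via Young's inequality, and the final triangle inequality with the a priori estimate for states sharing a control but differing in initial data. Your presentation of the middle step as a telescoping identity whose bracketed term is nonpositive is algebraically identical to the paper's addition of the two inequalities \eqref{lebinp} and \eqref{ansch}.
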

	
	\begin{proof}
		Let $h\in[0,T-\tau]$ (without loss of generality we assume $h\in(0,T-\tau)$), $\eta\in H$ small enough so that Lemma~\ref{esobfu} holds. Let  {$u=u_{h,\eta}\in\Uthad$} be a minimizer of problem $(\mathbf P_{\tau+h,y_{\bar u}(\tau)+\eta})$. Due to Corollary \ref{corlem} there exists a constant $\gamma>0$ (independent of $u,h,\eta$) such that 
		\begin{equation}
			\label{lebinp}
			\gamma\,\big\| y^{\tau+h,y_{\bar u}(\tau+h)}_u - y^{\tau+h,y_{\bar u}(\tau+h)}_{\bar u}\big\|_{L^2(Q_{\tau+h})}^2 \le \mathcal J_{\tau+h,y_{\bar u}(\tau+h)}(u)-\mathcal J_{\tau+h,y_{\bar u}(\tau+h)}(\bar u).
		\end{equation}
		Since $u$ is a minimizer of problem $(\mathbf P_{\tau+h,y_{\bar u}(\tau)+\eta})$, we have
		\begin{align}
			\label{ansch}
			0\le\mathcal J_{\tau+h,y_{\bar u}(\tau)+\eta}(\bar u)-\mathcal J_{\tau+h,y_{\bar u}(\tau)+\eta}(u).
		\end{align}
		Adding inequalities \eqref{lebinp} and \eqref{ansch}, and employing Lemma~\ref{esobfu}, we obtain
		\begin{align*}
			&\gamma\,{\| y^{\tau+h,y_{\bar u}(\tau+h)}_u - y^{\tau+h,y_{\bar u}(\tau+h)}_{\bar u}\|}_{L^2(Q_{\tau+h})}^2\\
			&~\leq \mathcal J_{\tau+h,y_{\bar u}(\tau+h)}(u)-\mathcal J_{\tau+h,y_{\bar u}(\tau)+\eta}(u)+\mathcal J_{\tau+h,y_{\bar u}(\tau)+\eta}(\bar u)-\mathcal J_{\tau+h,y_{\bar u}(\tau+h)}(\bar u)\\
			&~\le c \left({\|y_{\bar u}(\tau+h)-y_{\bar u}(\tau)\|}_H+{\|\eta\|}_H \right){\|y^{\tau+h,y_{\bar u}(\tau+h)}_{u}-y_{\bar u}^{\tau+h,y_{\bar u}(\tau+h)}\|}_{L^2(Q_{\tau+h})}\\
			&\quad~+ c\left({\|y_{\bar u}(\tau+h)-y_{\bar u}(\tau)\|}_H^2+{\|\eta\|}^2_H\right)
		\end{align*}
		for a constant $c>0$ which does not depend on $u,h$ and $\eta$. {Applying Young’s inequality, $ab \leq \nicefrac{a^2}{2\varepsilon}+\nicefrac{\varepsilon b^2}{2}$ for $\varepsilon>0$, for $a={\|y_{\bar u}(\tau+h)-y_{\bar u}(\tau)\|}_H+{\|\eta\|}_H$ and $b=\|y^{\tau+h,y_{\bar u}(\tau+h)}_{u}-y_{\bar u}^{\tau+h,y_{\bar u}(\tau+h)}\|_{L^2(Q_{\tau+h})}$ with $\varepsilon=\nicefrac{\gamma}{c}$ to the first term on the right-hand side  yields  
        \begin{align*}
           & \frac{\gamma}{2}\,{\| y^{\tau+h,y_{\bar u}(\tau+h)}_u - y^{\tau+h,y_{\bar u}(\tau+h)}_{\bar u}\|}_{L^2(Q_{\tau+h})}^2\\
           &\leq \frac{c(c+\gamma)}{\gamma}\left({\|y_{\bar u}(\tau+h)-y_{\bar u}(\tau)\|}_H^2+{\|\eta\|}^2_H\right).
        \end{align*}
        Rearranging the constants and taking the root gives for $\kappa:= \sqrt{\frac{2c(c+\gamma)}{\gamma^2}}$,
		\begin{align*}
			\| y^{\tau+h,y_{\bar u}(\tau+h)}_{u_{}}- y^{\tau+h,y_{\bar u}(\tau+h)}_{\bar u}\|_{L^2(Q_{\tau+h})}\le \kappa \Big(\|y_{\bar u}(\tau+h)-y_{\bar u}(\tau)\|_{H} + \|\eta\|_{H} \Big).
		\end{align*}
        }
        {The result then follows from the above, since
        \begin{align*}
            &\| y^{\tau+h,y_{\bar u}(\tau)+\eta}_{u_{h,\eta}}- y^{\tau+h,y_{\bar u}(\tau+h)}_{\bar u}\|_{L^2(Q_{\tau+h})}\leq \| y^{\tau+h,y_{\bar u}(\tau)+\eta}_{u_{h,\eta}}- y^{\tau+h,y_{\bar u}(\tau+h)}_{u_{h,\eta}}\|_{L^2(Q_{\tau+h})}\\
            &+{\| y^{\tau+h,y_{\bar u}(\tau+h)}_{u_{h,\eta}}- y^{\tau+h,y_{\bar u}(\tau+h)}_{\bar u}\|}_{L^2(Q_{\tau+h})}\leq \kappa' \Big(\|y_{\bar u}(\tau+h)-y_{\bar u}(\tau)\|_{H} + \|\eta\|_{H} \Big),
        \end{align*}
        for $\kappa'=(c_2+\kappa)$, where we used the a priori estimate
		\begin{align*}
			 \| y^{\tau+h,y_{\bar u}(\tau)+\eta}_{u_{h,\eta}}- y^{\tau+h,y_{\bar u}(\tau+h)}_{u_{h,\eta}}\|_{L^2(Q_{\tau+h})}\leq c_2  \Big(\|y_{\bar u}(\tau+h)-y_{\bar u}(\tau)\|_{H} + \|\eta\|_{H} \Big)
		\end{align*}
		which holds for some constant $c_2>0$ independent of $u,h,\eta$.}
	\end{proof}
        {
         The following identity, while classical in optimal control, underlies the differentiability analysis in Section \ref{proofss}, for which reason, we give a short comment on its derivation. For a given control $\bar u$ with associated state $\bar y$, the adjoint $\bar p$, defined as the weak solution on $[\tau,T]$ of
        \begin{align*}
			\begin{cases}
				\displaystyle-\frac{\partial p}{\partial t}- \dive\big(A(\bx)\nabla p\big) + f'(\bar y) p= \bar y - y_Q\text{ a.e. in } Q_{\tau}, \\
				p=0\text{ a.e. on } \Sigma_{\tau},\quad  p(T)=0\text{ a.e. in } \Omega, 
			\end{cases}
		\end{align*}
    is by standard parabolic theory an element of $ W(\tau,T)$. Let $\bar z$ denote the linearized state associated with a perturbation $(\delta u,\zeta)$, given as the solution of
        \begin{align*}
			\begin{cases}
				\displaystyle \frac{\partial z}{\partial t}- \dive\big(A(\bx)\nabla z\big) + f'(\bar y) z= \delta u\text{ a.e. in } Q_{\tau}, \\
				z=0\text{ a.e. on } \Sigma_{\tau},\quad  z(\tau )=\zeta\text{ a.e. in } \Omega. 
			\end{cases}
		\end{align*}
        Integration-by-parts for elements of $W(\tau,T)$ then yields the very useful relation
        \[
        \int_{Q_\tau} (\bar y-y_Q) \bar z\,dx\,dt = \int_{Q_\tau} \bar p\,\delta u\,dx\,dt + \langle \bar p(\tau),\zeta\rangle_{L^2(\Omega)}.
        \]
        }
	\section{Proofs of \cref{thm3,thm2}}
    \label{proofss}
	

    
    \begin{proof}[Proof of \cref{thm3}]
	Without any loss of generality, assume that $\tau=0$. To simplify notation, we abbreviate $\upsilon(\eta)\colonequals\upsilon(0,\eta)$ for $\eta\in H$ and $L(t,\bx,y)\colonequals \nicefrac{1}{2}|y-y_Q(t,\bx)|^2$ for $(t,\bx)\in Q$. Recall that $\langle \cdot, \cdot\rangle$ denotes the duality paring/inner product in $H=L^2(\Omega)$.
	\smallbreak 
	We shall prove that 
	\begin{align*}
		&\limsup_{\eta\to 0} \frac{\upsilon(y_0+\eta)-\upsilon(y_0)-\langle p_{\bar u}(0),\eta\rangle}{\|\eta\|_H}\le 0\le \liminf_{\eta\to 0}\frac{\upsilon(y_0+\eta)-\upsilon(y_0)-\langle p_{\bar u}(0),\eta\rangle}{\|\eta\|_H}.
	\end{align*}
	The calculations will be carried in two parts, items \textbf{(I)} and \textbf{(II)} below. 
	\smallbreak 
	For each $\eta\in H$, let $u_\eta$ be a minimizer of $(\mathbf P_{0,y_0+\eta})$ and $y_\eta$ its associated state. Using \cref{stability} and the triangle inequality, one can estimate 
	\begin{align}\label{stakappa}
		\| y_\eta - y_{\bar u}\|_{L^2(Q)}\le \kappa \|\eta\|_{H} 
	\end{align}
	for some constant $\kappa>0$ independent of $\eta$. 
	\smallbreak 
	\textbf{(I)}  Let us begin observing that
	\begin{align}\label{Ep1}
		&\upsilon(y_0+\eta)-\upsilon(y_0)= \int_{{Q}} L(t,\bx,y_\eta(t,\bx))-L(t,\bx,y_{\bar u}(t,\bx))\,\dx\dt\\
		&= \int_{{Q}} L(t,\bx,y_\eta(t,\bx)) - L(t,\bx,y_{\bar u}(t,\bx)) - L_y(t,\bx,y_{\bar u}(t,\bx))\big(y_\eta(t,\bx) - y_{\bar u}(t,\bx)\big)\,\dx\dt\nonumber\\
		&\quad+\int_{{Q}} L_y(t,\bx,y_{\bar u}(t,\bx))\big(y_\eta(t,\bx) - y_{\bar u}(t,\bx)\big)\,\dx\dt\equalscolon A + B.\nonumber 
	\end{align}
	One can easily estimate, 
	 {
	\begin{align*}
		A&= \int_Q \frac{1}{2}\left( y_\eta (t,x)^2-y_{\bar u}(t,x)^2\right) - y_{\bar u}(t,x) y_\eta(t,x)\dx\dt \ge 0. 
	\end{align*}
    }
	Now, we can break term $B$ in two parts, 
	\begin{align*}
		B&=\int_{{Q}} L_y(t,\bx,y_{\bar u}(t,\bx))\big(y_\eta(t,\bx) - y_{\bar u}(t,\bx)\big)\,\dx\dt\\
		& = \int_{{Q}} L_y(t,\bx,y_{\bar u}(t,\bx))\big(y_\eta(t,\bx) - y_{\bar u}(t,\bx) - z_{\eta}(t,\bx)\big)\,\dx\dt\\
		&\hspace*{1.5cm} +  \int_{{Q}} L_y(t,\bx,y_{\bar u}(t,\bx)) z_{\eta}(t,\bx)\,\dx\dt\\
		&=: B_1 + B_2,
	\end{align*}
	where $z_\eta\in W(0,T)$ is the solution of equation 
	\begin{align}\label{eq:sumofpde111}
		\begin{cases}
			\displaystyle\frac{\partial z_{\eta}}{\partial t}- \dive\big(A(\bx)\nabla z_{\eta}\big) + f'(y_{\bar u} ) z_{\eta}= u_\eta - \bar u\text{ a.e. in } Q_{}, \\
			z_{\eta}=0\text{ a.e. on } \Sigma_{},\quad  z_{\eta}(0)=\eta\text{ a.e. in } \Omega. 
		\end{cases}
	\end{align}
Estimates of the type $L^s$-$L^1$ (see Lemma \ref{lemma:LsL1}) easily yield that
\begin{align*}
	\|y_\eta - y_{\bar u} - z_{\eta}\|_{L^s(Q)}\le c\| y_{\eta} - y_{\bar u} \|_{L^2(Q)}^2
\end{align*}
for some constant $c>0$ independent of $\eta$, where $s>1$  is such that $s^{-1}+r^{-1}=1$ with  $r:=\nicefrac{2(d+2)}{d}$. Combining this the stability estimate (\ref{stakappa}), we get 
\begin{align*}
	|B_1| \le c'\| y_{\eta} - y_{\bar u} \|_{L^2(Q)}^2 \le c'\kappa^2 \|\eta\|_{H}^2 
\end{align*}
for some constant $c'>0$ independent of $\eta$. Using  the integration parts formula and first-order necessary condition for $\bar u$, we get 
	\begin{align*}
	B_2&= \int_Q p_{\bar u}(t,\bx)(u_\eta(t,\bx) -\bar u(t,\bx))\,\dx\dt + {\langle p_{\bar u}(0),\eta \rangle}\ge {\langle p_{\bar u}(0),\eta \rangle}. 
\end{align*}
The two previous estimates allow us to conclude that $B\ge \langle p_{\bar u}(0),\eta\rangle + o\big(\|\eta\|_H\big)$. 
	Therefore, combining estimates $A$ and $B$,
	\begin{align*}
	\liminf_{\eta\to 0}	\frac{\upsilon(y_0+\eta)-\upsilon(y_0) - {\langle p_{\bar u}(0),\eta \rangle}}{\|\eta\|_H}& =
   {\liminf_{\eta\to 0}\frac{A+B_1}{\|\eta\|_H}+\liminf_{\eta\to 0}\frac{\langle p_{\bar u},u_\eta-\bar u\rangle}{\|\eta\|_H}} \geq 0.
	\end{align*}
This concludes with the subdifferentiability estimate claimed in this item.  
	\smallbreak 
   {The estimate for the opposite inequality is obtained by interchanging $(u_\eta,y_\eta)$ and $(\bar u,\bar y)$ in \eqref{Ep1}, using integration by parts with the adjoint $p_\eta$, and repeating the same algebraic decomposition and using fact that $\|p_{\eta}(0)-p_{\bar u}(0)\|_H\le c''\|\eta\|_H$ for some constant $c''>0$ independent of $\eta$.}
   \end{proof}

	\begin{proof}[Proof of \cref{thm2}]
	Let $\tau > 0$ be given. We first prove item (i). Item (ii) is then proved separately, using \cref{thm3}. For each $h > 0$, consider a minimizer $u_h \in \Uthad$ for problem $(\mathbf{P}_{\tau+h, y_{\bar{u}}(\tau)})$. Denote by $y_h$ and $p_h$ the state and costate associated with $u_h$. Note that $y_h(\tau+h) = y_{\bar{u}}(\tau)$. 
	\smallbreak 
	The proof is divided into three steps. First, we show $\|y_h - y_{\bar{u}}\|_{L^2(Q_{\tau+h})}^2 = o(h)$. In the subsequent steps, we use this to get sub- and super-differentiability estimates. 
	To simplify notation, we abbreviate $\upsilon(t)\colonequals\upsilon(t,y_{\bar u}(\tau))$ for $t\in[0,T]$ and $L(t,\bx,y)\colonequals \nicefrac{1}{2}\,|y-y_Q(t,\bx)|^2$ for $(t,\bx)\in Q$. To simplify some calculations, and show the essence of the proof, we assume that $f(y)=y$; the arguments to consider a general nonlinearity satisfying \cref{Assumption}-$(ii)$  are standard, and outlined in the proof of \cref{thm3}.
	Step 1. By \cref{stability}, one gets $
		\|y_{u_h}^{\tau+h,y_{\bar u}(\tau+h)} - y_{\bar u}\|_{L^2(Q_{\tau+h})}^2\le \kappa\|y_{\bar u}(\tau+h)-y_{\bar u}(\tau)\|_H^2$ for some constant $\kappa>0$.
	Since $y_h$ and $y_{u_h}^{\tau+h,y_{\bar u}(\tau+h)}$ satisfy the same dissipative PDE, save for the initial condition at $\tau+h$,  we can easily obtain that there exists a constant $c>0$ such that
	\begin{align*}
		{\| y_{h}-y_{u_h}^{\tau+h,y_{\bar u}(\tau+h)}\|}_{L^2(Q_{\tau+h})}\le c\,{\|  y_{\bar u}(\tau+h) - y_{\bar u}(\tau) \|}_H.
	\end{align*}
	Combining the two previous estimates and applying the triangle inequality, we deduce
	\begin{align*}
		&{\|y_h - y_{\bar u}\|}_{L^2(Q_{\tau+h})}^2\le 2\,\| y_{h}-y_{u_h}^{\tau+h,y_{\bar u}(\tau+h)}\|_{L^2(Q_{\tau+h})}^2 + 2\,\|y_{u_h}^{\tau+h,y_{\bar u}(\tau+h)} - y_{\bar u}\|_{L^2(Q_{\tau+h})}^2\\
		&\leq 2(c^2+\kappa^2)\,\|  y_{\bar u}(\tau+h) - y_{\bar u}(\tau) \|_H^2 = 2(c^2+\kappa^2)\,\Big\|\frac{y_{\bar u}(\tau+h) - y_{\bar u}(\tau) }{h} \Big\|_H^2 \,h^2 
	\end{align*}
	Since $y_{\bar u}:[0,T]\to H$ is assumed to be right differentiable at $\tau$, 
	\begin{align*}
		h\hspace{0.04cm}\Big\|\frac{y_{\bar u}(\tau+h) - y_{\bar u}(\tau) }{h} \Big\|_H \longrightarrow 0\quad \text{as}\quad h\longrightarrow 0^+.
	\end{align*}
	Thus, we conclude that $\|y_h - y_{\bar u}\|_{L^2(Q_{\tau+h})}^2\le o(h)$.
	\smallbreak 
	{Step 2.} Next we prove that 
	\begin{align*}
		\liminf_{h\to 0^+} \frac{\upsilon(\tau+h)-\upsilon(\tau)}{h}\ge - \int_{\Omega} p_{\bar u}(\tau,\bx)\,\frac{\rmd^+ y_{\bar u}}{\dt}(\tau,\bx)\,\dx -   \int_\Omega L\big(\tau,\bx,y_{\bar u}(\tau,\bx)\big)\,\dx.
	\end{align*}
	First, we observe that
    \small
	\begin{align*}
		&\upsilon(\tau+h) - \upsilon(\tau)\\
		&= \int_{\tau+h}^{T}\int_{\Omega} [L(t,\bx,y_h(t,\bx)) - L(t,\bx,y_{\bar u}(t,\bx))]\, \dx\dt - \int_{\tau}^{\tau+h} \int_{\Omega} L(t,\bx,y_{\bar u}(t,\bx))\,\dx\dt\\
		&= \int_{Q_{\tau+h}} [L(t,\bx,y_h(t,\bx)) - L(t,\bx,y_{\bar u}(t,\bx)) - L_y(t,\bx ,y_{\bar u}(t,\bx))(y_h(t,\bx) - y_{\bar u}(t,\bx))]\, \dx\dt\\
		&\quad+ \int_{Q_{\tau+h}} L_{y}(t,\bx,y_{\bar u}(t,\bx))(y_h(t,\bx)-y_{\bar u}(t,\bx))\,\dx\dt \\
		&\quad- \int_{\tau}^{\tau+h} \int_{\Omega} L(t,\bx,y_{\bar u}(t,\bx))\,\dx\dt\equalscolon I + II + III.
	\end{align*}
    \normalsize
	From Step 1,  $I= o(h)$. Now, for the second term, using integration by parts,
	\begin{align*}
		II&= \int_{\tau+h}^{T}\int_{\Omega} p_{\bar u}(t,\bx)(u_h(t,\bx) - \bar u(t,\bx))\,\dx\dt\\
		&+\int_{\Omega} p_{\bar u}(\tau+h,\bx)\big(y_{h}(\tau+h,\bx) - y_{\bar u}(\tau+h,\bx)\big)\,\dx\\
		&\ge \int_{\Omega} p_{\bar u}(\tau+h,\bx)\big(y_{h}(\tau+h,\bx) - y_{\bar u}(\tau+h,\bx)\big)\,\dx\\
		&= -\int_{\Omega} p_{\bar u}(\tau+h,\bx)\big(y_{\bar u}(\tau+h,\bx)-y_{\bar u}(\tau,\bx)\big)\,\dx.
	\end{align*}
	In the previous estimate, we used the first-order necessary condition of the original problem when proving the inequality.  Now, 
	\begin{align*}
		\frac{\upsilon(\tau+h)-\upsilon(\tau)}{h}&\ge \frac{o(h)}{h}  - \int_{\Omega} p_{\bar u}(\tau+h,\bx)\frac{y_{\bar u}(\tau+h,\bx) - y_{\bar u}(\tau,\bx)}{h}\,\dx\\
		& \hspace{4cm}- \frac{1}{h} \int_{\tau}^{\tau+h}\int_{\Omega} L(t,\bx,y_{\bar u}(t,\bx))\,\dx\dt.
	\end{align*}
	Taking limes inferior on both sides when $h\to0^+$ yields the claim in this step. 
	\smallbreak 
	{Step 3.} 
	We will prove that 
	\begin{align*}
		\limsup_{h\to 0^+} \frac{\upsilon(\tau+h)-\upsilon(\tau)}{h}\le  - \int_{\Omega} p_{\bar u}(\tau,\bx)\,\frac{\rmd^+ y_{\bar u}}{\dt}(\tau,\bx)\,\dx -   \int_\Omega L\big(\tau,\bx,y_{\bar u}(\tau,\bx)\big)\,d\bx.
	\end{align*}
	Let us begin by observing
    \small
	\begin{align*}
		\small 
		& \upsilon(\tau+h) - \upsilon(\tau) \\
		&= \int_{Q_{\tau+h}} \Big[L(t,\bx,y_h(t,\bx)) - L(t,\bx,y_{\bar u}(t,\bx))\Big]\,\dx\dt -\int_{\tau}^{\tau+h} \int_{\Omega} L(t,\bx,y_{\bar u}(t,\bx))\,\dx\dt\\
		&= -\int_{Q_{\tau+h}}\Big[L(t,\bx,y_{\bar u}(t,\bx) - L(t,\bx,y_{h}(t,\bx)) - L_y(t,\bx,y_{h}(t,\bx)(y_{\bar u}(t,\bx) - y_{h}(t,\bx))\Big]\,\dx\dt\\
		&\quad - \int_{Q_{\tau+h}}L_{y}(t,\bx,y_h(t,\bx))(y_{\bar u}(t,\bx)-y_h(t,\bx))\,\dx\dt\\
		&\quad- \int_{\tau}^{\tau+h}\int_{\Omega} L(t,\bx, y_{\bar u}(t,\bx))\,\dx\dt\equalscolon -I - II + III.
	\end{align*}
    \normalsize
	We can use Step 1 to show $I= o(h)$. Now, for the second term, using integration by parts and denoting by $p_h$ the adjoint variable corresponding to $u_h$, 
	\begin{align*}
		II &= \int_{Q_{\tau+h}} p_{h}(t,\bx)(\bar u (t,\bx)- u_h(t,\bx))\,\dx\dt\\
		&\quad+\int_{\Omega} p_{h}(\tau+h,\bx)\big(y_{\bar u}(\tau+h,\bx) - y_{h}(\tau+h,\bx)\big)\,\dx\\
		&\ge \int_{\Omega} p_{h}(\tau+h,\bx)\big(y_{\bar u}(\tau+h,\bx) - y_{h}(\tau+h,\bx)\big)\,\dx\\
		&= \int_{\Omega} p_{h}(\tau+h,\bx)\big(y_{\bar u}(\tau+h,\bx) - y_{\bar u}(\tau,\bx)\big)\,\dx.
	\end{align*}
	We used the first-order necessary condition of problem \cref{Pteta} when proving the inequality. The third term requires no further estimation. Now, 
	\begin{align*}
		\frac{\upsilon(\tau+h)-\upsilon(\tau)}{h} &\le\frac{o(h)}{h}- \int_{\Omega} p_{h}(\tau+h,\bx)\frac{y_{\bar u}(\tau+h,\bx) - y_{\bar u}(\tau,\bx)}{h}\,\dx\\
		& \quad- \frac{1}{h} \int_{\tau}^{\tau+h}\int_{\Omega} L(t,\bx,y_{\bar u}(t,\bx))\,\dx\dt.
	\end{align*}
	Taking the limes superior on both sides when $h\to0^+$ yields the claim in this step proving item (i). 
    \smallbreak\noindent
    We now prove item (ii). Instead of repeating the argument used for item (i), we use Theorem 2.4 to obtain the left derivative directly from differentiability with respect to the initial state. Fix $\tau\in(0,T]$ and set
\[
g(t)\colonequals \upsilon\bigl(t,y_{\bar u}(\tau)\bigr),\qquad t\in[0,\tau].
\]
For $h\in(0,\tau)$, we abbreviate $t_h\colonequals \tau-h$ and
\[
\delta_h \colonequals y_{\bar u}(\tau)-y_{\bar u}(t_h)\in H.
\]
By \cref{thm3}, for every $t_h\in[0,T]$ the map $\eta\mapsto \upsilon(t_h,\eta)$ is Fr\'echet-differentiable at
$\eta=y_{\bar u}(t_h)$ and
\[
\nabla_\eta \upsilon\bigl(t_h,y_{\bar u}(t_h)\bigr)=p_{\bar u}(t_h)\quad\text{in }H.
\]
Hence, applying the differentiability of \cref{thm3} at
$\bigl(t_h,y_{\bar u}(t_h)\bigr)$, there exists a remainder
$r(h)\in\mathbb{R}$ with $r(h)=o(\|\delta_h\|_H)$ such that
\begin{align}
\label{eq:frechet_expansion_timeproof}
\upsilon\bigl(t_h,y_{\bar u}(\tau)\bigr)
=
\upsilon\bigl(t_h,y_{\bar u}(t_h)\bigr)
+\langle p_{\bar u}(t_h),\delta_h\rangle
+r(h).
\end{align}
Since $\frac{d^- y_{\bar u}}{dt}(\tau)$ exists in $H$, we have
$\|\delta_h\|_H
=
\|y_{\bar u}(\tau)-y_{\bar u}(t_h)\|_H
=
O(h)$, and therefore $r(h)=o(h)$. On the other hand, Bellman's optimality principle implies that $\bar u|_{[t_h,T]}$ is a minimizer of
$(\mathbf P_{t_h,y_{\bar u}(t_h)})$ and $\bar u|_{[\tau,T]}$ of
$(\mathbf P_{\tau,y_{\bar u}(\tau)})$; therefore
\begin{align}
\label{eq:bellman_segment}
\upsilon\bigl(t_h,y_{\bar u}(t_h)\bigr)
=\int_{t_h}^{\tau}\int_\Omega L\bigl(t,\bx,y_{\bar u}(t,\bx)\bigr)\,\dx\dt
+\upsilon\bigl(\tau,y_{\bar u}(\tau)\bigr).
\end{align}
Combining \eqref{eq:frechet_expansion_timeproof} and \eqref{eq:bellman_segment} yields
\begin{align*}
\upsilon\bigl(\tau,y_{\bar u}(\tau)\bigr)-\upsilon\bigl(t_h,y_{\bar u}(\tau)\bigr)
&=
-\int_{t_h}^{\tau}\int_\Omega L\bigl(t,\bx,y_{\bar u}(t,\bx)\bigr)\,\dx\dt
-\langle p_{\bar u}(t_h),\delta_h\rangle-r(h).
\end{align*}
Dividing by $h=\tau-t_h$ we obtain
\begin{align}
\label{eq:diffquot_timeproof}
\frac{g(\tau)-g(t_h)}{h}
=
-\frac{1}{h}\int_{t_h}^{\tau}\int_\Omega L\bigl(t,\bx,y_{\bar u}(t,\bx)\bigr)\,\dx\dt
-\Big\langle p_{\bar u}(t_h),\frac{\delta_h}{h}\Big\rangle
-\frac{r(h)}{h}.
\end{align}
Since $y_{\bar u}\in C([0,T];H)$ and $y_Q\in C([0,T];H)$, the map
$t\mapsto \int_\Omega L\bigl(t,\bx,y_{\bar u}(t,\bx)\bigr)\,\dx$ is continuous on $[0,T]$, and therefore
\[
\frac{1}{h}\int_{t_h}^{\tau}\int_\Omega L\bigl(t,\bx,y_{\bar u}(t,\bx)\bigr)\,\dx\dt
\longrightarrow \int_\Omega L\bigl(\tau,\bx,y_{\bar u}(\tau,\bx)\bigr)\,\dx
\quad\text{as}\quad h\longrightarrow0^+.
\]
Moreover, $p_{\bar u}\in W(0,T)\hookrightarrow C([0,T];H)$ implies $p_{\bar u}(t_h)\to p_{\bar u}(\tau)$ in $H$, and the
assumption that $\frac{\rmd^-y_{\bar u}}{\dt}(\tau)$ exists in $H$ gives
\[
\frac{\delta_h}{h}=\frac{y_{\bar u}(\tau)-y_{\bar u}(\tau-h)}{h}\longrightarrow \frac{\rmd^-y_{\bar u}}{\dt}(\tau)\quad\text{as}\quad h\longrightarrow0^+.
\]
Finally, from \eqref{eq:frechet_expansion_timeproof} and the boundedness of $\|\delta_h\|_H/h$ we obtain
\[
\frac{r(h)}{h}=\frac{r(h)}{\|\delta_h\|_H}\,\frac{\|\delta_h\|_H}{h}\longrightarrow  0\quad\text{as}\quad h\longrightarrow0^+.
\]
Passing to the limit in \eqref{eq:diffquot_timeproof} yields the existence of the left derivative of $g$ at $\tau$ and
\[
\frac{\rmd^-}{\dt}\upsilon\bigl(\tau,y_{\bar u}(\tau)\bigr)
=
-\int_\Omega \Bigl[L\bigl(\tau,\bx,y_{\bar u}(\tau,\bx)\bigr)+p_{\bar u}(\tau,\bx)\,\frac{\rmd^-y_{\bar u}}{\dt}(\tau,\bx)\Bigr]\,\dx,
\]
which is the claimed formula in item \emph{(ii)}. This completes the proof.
\end{proof}

	\section{Some general remarks}
	We discuss modifications of the control model covered by the theory and give an outlook on concluding investigations and related results.
	
	\subsection{Differentiability with initial condition in  $H^1(\Omega)$ and $L^\infty(\Omega)$}\label{ss_diff}
	The arguments utilized in the proofs above hold if initial data in $H^1(\Omega)$ is considered. This provides additional regularity for the time derivative of the involved states. We refer to \cite{KB24} for such a control model with an infinite horizon. The most general PDE constraints for the optimal control problem can be considered when the initial data and its perturbation lie in $L^\infty(\Omega)$. In this case, not only do the proofs simplify, but also, due to the boundedness of the involved states, much more general problems can be considered. This allows, for instance, the boundedness assumption of second derivatives of the involved nonlinearities to be replaced by the weaker condition of local boundedness of second derivatives, allowing for wider nonlinearities appearing in the system, for instance, admitting the Schl\"ogl model. 
	An interesting question is whether one can relax the boundedness of second derivatives and still achieve differentiability in \( L^2(\Omega) \).

	\subsection{Differentiability in a neighborhood of the optimal trajectory}\label{diffneigh}
	The differentiability of the value function in a neighborhood of the optimal trajectory can be obtained in certain situations. For this discussion, we restrict ourselves to differentiability with respect to $\eta \in L^\infty(Q)$. We show that there exists $\delta>0$ (independent of $\tau$) such that at each $\tau$, $\frac{\partial \upsilon(\tau, \eta)}{\partial \eta}$ exists for all $\eta$ with $\|\eta-\bar y(\tau)\|_{L^2(Q_\tau)}<\delta$. Following the proofs in the last section, for a minimizer $u_\eta$ of the perturbed problem, having a growth 
	\begin{equation}\label{growthsec5}
		\mathcal J_{\tau,\eta}(u)- \mathcal J_{\tau,\eta}(u_\eta)\geq c\, \|y_u^{\tau, \eta}-y_{u_\eta}^{\tau, \eta}\|_{L^2(Q_\tau)}^2
	\end{equation}
	guarantees the differentiability along the trajectory. Therefore, when assuming that all perturbed problems permit a unique global minimizer with such growth for perturbations $\eta$ sufficiently small, the differentiability of the original problem in a neighborhood of its trajectory is obtained. In some circumstances, we {don't} need to assume the growth of the perturbed problems. For this, we follow the investigation of the stability of a second order sufficient condition in \cite[Section 5.2.1]{JS2024}. We assume that there exists a constant $\tilde c\in \mathbb R_+$ such that the {minimizers} of the original problem satisfies 
	\begin{equation}
		L_{yy}(t,\bx,\bar y(t,\bx))-\bar p(t,\bx)f_{yy}(t,\bx,\bar y(t,\bx)>\tilde c>0.
	\end{equation}
	Then, we find
    \small
	\begin{align*}
		&\mathcal{J}_{\tau,\eta}''(u_\eta)(u-u_\eta)^2\\
		&=\int_\tau^T\int_\Omega(L_{yy}(t,\bx,y_{u_\eta}(t,\bx))-p_{u_\eta}(t,\bx)f_{yy}(t,\bx,y_{u_\eta}(t,\bx)) z^{2;\tau,\eta}_{u_\eta,u-u_\eta}(t,\bx)\,\dx\dt\\
		&=\int_\tau^T\int_\Omega(L_{yy}(t,\bx,\bar y(t,\bx))-\bar p(t,\bx)f_{yy}(t,\bx,\bar y(t,\bx))) z^{2;\tau,\eta}_{u_\eta,u-u_\eta}(t,\bx)\,\dx\dt\\
		&+\int_\tau^T\int_\Omega (L_{yy}(t,\bx,y_{u_\eta}(t,\bx))-L_{yy}(t,\bx,\bar y(t,\bx)))z^{2;\tau,\eta}_{u_\eta,u-u_\eta}(t,\bx)\,\dx\dt\\
		&+\int_\tau^T\int_\Omega (\bar p(t,\bx)-p_{u_\eta}(t,\bx))f_{yy}(t,\bx,\bar y(t,\bx))z^{2;\tau,\eta}_{u_\eta,u-u_\eta}(t,\bx)\,\dx\dt\\
		&+\int_\tau^T\int_\Omega p_{u_\eta}(t,\bx)(f_{yy}(t,\bx,\bar y(t,\bx))-f_{yy}(t,\bx,y_{u_\eta}(t,\bx)))z^{2;\tau,\eta}_{u_\eta,u-u_\eta}(t,\bx)\,\dx\dt\\
		&\geq \tilde c\,\| z^{\tau,\eta}_{u_\eta,u-u_\eta}\|_{L^2(Q_\tau)}^2+ I_1+I_2+I_3.
	\end{align*}
	\normalsize
	The terms $I_j$, $j\in \{1,2,3\}$ can be estimated as follows
	\begin{align*}
		&\vert I_1 \vert \leq \text{Lip}_{L_{yy}} \| y_{u_\eta}-\bar y\|_{L^\infty(Q_\tau)}  \| z^{\tau,\eta}_{u_\eta,u-u_\eta}\|_{L^2(Q_\tau)}^2,\\
		&\vert I_2 \vert \leq \|f_{yy}(\cdot, \bar y)\|_{L^\infty(Q_\tau)} \| \bar p-p_{u_\eta}\|_{L^\infty(Q_\tau)}  \| z^{\tau,\eta}_{u_\eta,u-u_\eta}\|_{L^2(Q_\tau)}^2,\\
		&\vert I_3 \vert \leq \text{Lip}_{f_{yy}} \|p_{u_\eta}\|_{L^\infty(Q_\tau)} \| y_{u_\eta}-\bar y\|_{L^\infty(Q_\tau)}  \| z^{\tau,\eta}_{u_\eta,u-u_\eta}\|_{L^2(Q_\tau)}^2.
	\end{align*}
	Since $L$ is given as an $L^2(Q)$-tracking type objective functional, when the state equation is linear, the terms $I_1$ and $I_3$ vanish, and it is left to estimate
	\begin{align}\label{estinft}
		\|\bar p-p_{u_\eta}\|_{L^\infty(Q_\tau)}&\leq c_r \| \bar y-y_{u_\eta}\|_{L^r(Q_\tau)}\leq c_r \| \bar y-y_{u_\eta}\|_{L^\infty(Q_\tau)}^{\frac{r-2}{r} }\| \bar y-y_{u_\eta}\|_{L^2(Q_\tau)}^{\frac{1}{r}}\nonumber\\
		&\leq c_r c\| \bar y-y_{u_\eta}\|_{L^\infty(Q_\tau)}^{\frac{r-2}{r} }\| \eta\|_{L^2(Q_\tau)}^{\frac{1}{r}}.
	\end{align}
	Thus, the term $I_2$ can be absorbed if the perturbation $\eta$ is sufficiently small. 
	If the state equation is nonlinear, the terms $I_1$ and $I_3$ do not vanish and we need to estimate $ \| y_{u_\eta}-\bar y\|_{L^\infty(Q_\tau)}$. For this, we may either use that if the states are sufficiently close in $\|\cdot\|_{L^ 2(Q)}$, they are so in $\|\cdot\|_{L^ \infty(Q)}$. Alternatively, we may use as an assumption, for given positive constants, $c, \delta$ and $\gamma\in [1,\infty)$ the control growth
	\begin{equation}
		\label{errorboundcont}
		\mathcal J(u) - \mathcal J(\bar u) \ge c\| {u} - \bar u\|_{L^1(Q)}^{1+1/\gamma}
	\end{equation}
	for all $u\in \mathcal{U}$ with $\|u-\bar u\|_{L^ 1(Q)}\leq \delta$.
	The growth \eqref{errorboundcont} implies control stability, and we can estimate the terms $I_1$ and $I_3$ similar to \eqref{estinft}, which allows us to absorb all of the terms and to obtain the stability of the second order sufficient condition. This implies \eqref{growthsec5} and the differentiability of the value function in a neighborhood as claimed.
\subsection{Multiple minimizers}
\label{ss_mm}
{In the {proofs above} we assumed that the original problem has one unique global minimizer. It is expected from numerical evidence and indicated from theoretical results as in \cite[Theorem 3.9]{DW_2024}, that this is often the case. For affine tracking problems, one can prove that the set of targets that give a unique minimizer is generic in $L^2(\Omega)$. In some tracking problems, the existence of a unique minimizer for every target can be achieved by making natural hypotheses on the data of the problem, see \cite[Corollary 5.3]{CW_2021}. Still, there are also problems with a tracking datum yielding more than one {minimizer} even with regularization terms; see \cite[Theorem 1.1]{P_2023}.} The investigation of what can be said in this case about the differentiability of the value function,  we leave for future work.

\section*{Acknowledgments}
The authors thank the anonymous referees for their careful reading of the manuscript and are grateful for several remarks that helped to improve the presentation of the paper.

\bibliographystyle{siamplain}
\bibliography{references}

\end{document}